\documentclass[a4paper,12pt]{article}
\usepackage[top=2.5cm,bottom=2.5cm,left=3cm,right=2cm]{geometry}
\usepackage{cite, amsmath, amssymb}
\usepackage[margin=1cm,font=small, format=hang,labelsep=period, labelfont=bf]{caption}
\usepackage{color}
\usepackage{graphics}
\usepackage{csquotes}
\usepackage{mathtools}
\usepackage{amsthm}
\usepackage{enumerate}
\usepackage{authblk}
\usepackage{breqn}
\usepackage{paralist}
\usepackage{amsmath}

\newcommand{\vk}{v_{qk,pk} }
\newcommand{\gk}{g_{qk,pk} }
\newcommand{\bj}{\bar\jmath}
\newcommand{\bi}{\bar\imath}
\newcommand{\be}{\begin{equation}}
\newcommand{\ee}{\end{equation}}

\newcommand{\vv}{{\bf V}}
\newcommand{\la}{\langle}
\newcommand{\ra}{\rangle}
\newcommand{\Supp}{\text{Supp}}

\newcommand{\np}{\mathbb{N}_+}

\newcommand{\al}{{\gamma}}

\newcommand{\xb}{{\bf x}}
\newcommand{\R}{\ensuremath{\mathbb{R}}}
\newcommand{\C}{\ensuremath{\mathbb{C}}}
\newcommand{\N}{\ensuremath{\mathbb{N}}}
\newcommand{\Q}{\ensuremath{\mathbb{Q}}}
\newcommand{\Z}{\ensuremath{\mathbb{Z}}}
\numberwithin{equation}{section}
\newtheorem{lm}{Lemma}
\newtheorem{theorem}{Theorem}
\newtheorem{pro}{Proposition}

\newtheorem{cor}{Corollary}

\newtheorem{definition}{Definition}

\setlength{\affilsep}{1em}

\numberwithin{equation}{section}


\title{Time-reversibility and integrability\\ of $p:-q$ resonant vector fields}

\author{Jaume Gin\'e$^1$, Valery G. Romanovski$^{2,3,4}$, Joan Torregrosa$^5$ \\
$^1${\it Departament de Matem\`atica, Universitat de Lleida, Av.~Jaume II, 69, 25001 Lleida, Catalonia, Spain}\\
$^2${\it Faculty of Electrical Engineering and Computer Science, University of Maribor,
Koro\v ska cesta 46, SI-2000 Maribor, Slovenia}\\
$^3${\it Center for Applied Mathematics and Theoretical Physics,\\
Mladinska 3, SI-2000 Maribor, Slovenia}\\
$^4${\it Faculty of Natural Science and Mathematics, University of Maribor,
Koro\v ska cesta 160, SI-2000 Maribor, Slovenia}\\
$^5${\it  Departament de Matem\'atiques, Universitat Aut\'onoma de Barcelona,  08193 Bellaterra, Barcelona, Catalonia, Spain}
}





\date{}

\begin{document}
\maketitle
\thispagestyle{empty}

\begin{abstract}
We study local analytical integrability in a neighborhood of $p:-q$ resonant
singular point of a two-dimensional vector field and its connection
to time-reversibility with respect to the non-smooth involution
$
\varphi(x,y)=(y^{p/q},x^{q/p}).
$
Some generalizations of the theory developed by K.~S.~Sibirsky for $1:-1$ resonant case
to the $p:-q$ resonant case are presented.
\end{abstract}

\maketitle

\section{Introduction}

 Consider  an $n$-dimensional  system of ordinary differential equations
 \be \label{sys_F}
 \dot x = F(x)
 \ee
  where  $F(x)$ is an $n$-dimensional vector functions
   defined on some domain $D$ of  $\R^n$ or $\C^n$.
It is said   (see e.g. \cite{BBT, Lamb})
that  system \eqref{sys_X}
  is \emph{time-reversible} on $D$ if there exists an
involution $\psi$ defined on $D$
 such that
\be \label{inv_gen}
D_\psi^{-1} \cdot  F \circ \psi = - F.
\ee
We say  that system \eqref{sys_X} is   \emph{completely analytically  integrable} on $D$  if it admits $n-1$ functionally independent analytic first integrals on $D$.

A time-reversal symmetry is one of the fundamental
symmetries that appears in nature, in  particular,  important both
for classical and quantum mechanics. Various properties of systems  exhibiting  such symmetries
have been studying by many authors, see e.g.
  \cite{AGG, BBT, GM, HPR, Lamb, TM11,WRZ}  and the references therein.

Our paper is devoted  to the investigation  of the interconnection of
time-reversibility and local integrability in a neighborhood
of a singular point of systems of the form
 \be \label{sys_X}
 \dot \xb =A \xb + X(\xb), 
 \ee
  where  $A$ is an $n \times n$ matrix with entries in $\R$ or $\C$
   $\xb=(x_1,\dots,x_n) $,  $ X(\xb)$ is a vector-function without
   constant and linear terms  defined on some domain $D$ of  $\R^n$ or $\C^n$.

One of the first results in such studies  is due to Poincar\'e.
It follows from his results  that if  in the two-dimensional
case the eigenvalue of $A$
are pure imaginary and the system has an axis  of symmetry
passing through the origin, then it admits an analytic  first integral
in a neighborhood of the origin.

A generalization  of this result is presented in \cite{Bib}, where it is shown that if {system \eqref{sys_X}} is time-reversibile with
respect to a certain linear involution and   two eigenvalues of the matrix $A$
are pure imaginary, then under some assumptions the
system has at least one analytic first integral in a neighborhood of the origin.

A detail study of the interconnection of time-reversibility and local integrability
for systems \eqref{sys_X} was presented in \cite{LPW}.
In \cite{LPW} and \cite{W} the notion of time reversibility was generalized to the
case when on the right hand side of \eqref{inv_gen} "$-1$" is replaced by a primitive
root of unity.


In this paper we limit our consideration to the two-dimensional systems
\eqref{sys_X} with non-degenerate matrix $A$.
In the case when a 2-dim system \eqref{sys_X} is real and the
eigenvalues of $A$ are pure imaginary, and the vector field is symmetric
with respect to a curve passing through the origin,  the origin  of \eqref{sys_X}
is a center, and, therefore, has an analytic local integral in a neighborhood
of the origin. This geometric argument was used in \cite{Z} in order to
find some integrable systems in the family of real  cubic systems
(see also \cite{BBT} for recent developments in this direction).
{The symmetry axis is, in the general case, an analytic curve passing through the origin. However, from the work of Montgomery and Zippin \cite{MZ}, any analytic involution $\psi$ associated with a time-reversal
symmetry can be linearized in such a way that the symmetry axis becomes a
straight line. From this result in \cite{AGG} the normal form theory is used to establish an algorithm to determine if
a two-dimensional systems \eqref{sys_X} is orbitally reversible.}

A detail study of real  polynomial  systems which are
time-reversible  under reflection with respect  to a line was performed
by Sibirsky \cite{Sib1,Sib2}. In particular, he showed that in the polynomial
case  the set of such systems in the space of parameters
is the  variety of a binomial ideal defined  by  invariants
of the rotation group of the system (some similar results were obtained also in \cite{CGMM,Liu2}).  Later on the results obtained
by Sibirsky were generalized to the case of complex systems \eqref{sys_X}
with $1:-1$ resonant singular point at the origin in \cite{JLR,R,RS1}.

In this paper we consider system  \eqref{sys_X}
having a  $p:-q$ resonant singular point at the origin, which we write
in the form
\begin{equation} \label{gspq}
\begin{aligned}
\dot x &= \phantom{-} p x - \sum_{j+k\ge 1, j\ge -1}^\infty
                                  a_{jk}x^{j+1}y^{k} =  px (1- \sum_{j+k\ge 1, j\ge -1}^\infty
                               \frac 1p   a_{jk}x^{j}y^{k} ) \\
\dot y &=            -q y + \sum_{ j+k\ge 1, j\ge -1  }^\infty
                                  b_{kj}x^{k}{y}^{j+1} =
                                   -qy (1- \sum_{ j+k\ge 1, j\ge -1  }^\infty
                                 \frac 1q b_{kj}x^{k}{y}^{j}  )  ,
\end{aligned}
\end{equation}
where $p,q\in \mathbb{N}$, $GCD(p,q)=1$.  Both vector field \eqref{gspq}
and the associated differential operator are  denoted by $\mathcal{X}$.

Unless $p=q=1$ this system is not time-reversible under a linear transformation.
Our  study deals with the  time-reversibility of system \eqref{gspq} with respect to
the involution
\be \label{inv_pq}
\varphi(x,y)=(y^{p/q},x^{q/p}).
\ee

We will prove that if a system \eqref{gspq} is time-reversible with respect to \eqref{inv_pq}, then it admits an analytic first integral on a neighborhood of the origin.
We will also  give  an extension of the results  of \cite{Sib1,Sib2} and their
generalizations obtained in \cite{JLR,R,RS1} presenting an algorithm for
finding  subsets of polynomial   systems \eqref{gspq}  which are time-reversible with respect to \eqref{inv_pq} and generalizing the notion of Sibirsky ideal to
polynomial systems \eqref{gspq}. It will be shown  that, in fact,
the theory developed for $1:-1$ resonant case can be extended to systems
\eqref{gspq}, however not to the whole family, but only to a certain subfamily of \eqref{gspq}.

\section{Integrability of time-reversible systems}

For system \eqref{gspq} it is always possible to find
a series of the form \eqref{pqInt}
\begin{equation} \label{pqInt}
\Psi(x, y) = x^q  y^p
           + \sum_{\substack{j + k > p + q \\ j, k \in \N_0}}
           v_{j - q,k - p} x^j y^k
\end{equation}
for which
\begin{equation} \label{pqbar4}
\mathcal{X} \Psi = g_{q,p}(x^qy^p)^2 + g_{2q,2p}(x^qy^p)^3 + g_{3q,3p}(x^qy^p)^{4}
                             + \cdots\,,
\end{equation}
where $g_{kq,kp}$, $k=1,2,\dots$, are polynomials in the parameters $a_{jk}, b_{kj}$
of system \eqref{gspq}. Polynomials   $g_{kq,kp}$ are called the  saddle
quantities of system \eqref{gspq} (sometimes also the  focus quantities).
 System \eqref{gspq} corresponding to  some fixed values
$a_{jk}^*, b_{kj}^*$  of the parameters   has a local analytical first integral in
a neighborhood of the origin if and only if    $g_{kq,kp}(a^*,b^*)=0$   $\forall k\in \N$
(see e.g. \cite{RS1,RXZ}).

The following theorem shows that if system \eqref{gspq} is time-reversible with respect
to \eqref{inv_pq} then it has an analytic first integral of the form
\eqref{pqInt}. Observe, that if $p=q=1$ then the map \eqref{inv_pq} is just a permutation
of the variables, so the statement presents  a generalization of known results of
\cite{Bib,LPW,R} to the case of $p:-q$ resonant systems.

\begin{theorem}\label{th1}
Assume that system \eqref{gspq}  is time-reversible
with respect to the involution \eqref{inv_pq},
that is,
\be \label{inv_yx}
  D_\varphi^{-1} \cdot  \mathcal{X}\circ  \varphi =  - \mathcal{X}.
\ee
Then  it admits an analytic first integral of the form
\eqref{pqInt} in a neighborhood of the origin.
\end{theorem}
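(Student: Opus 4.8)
The plan is to use the reversibility relation to turn the formal candidate $\Psi$ of \eqref{pqInt} into a genuine formal first integral by symmetrization, and then to invoke the saddle-quantity criterion recalled above. The first step is a formal chain-rule identity. Rewriting \eqref{inv_yx} for the vector field as $\mathcal{X}(\varphi(z))=-D_\varphi(z)\,\mathcal{X}(z)$ and differentiating $\Psi\circ\varphi$, I would obtain
\[
  \mathcal{X}(\Psi\circ\varphi)
     = \big(D\Psi\circ\varphi\big)\,D_\varphi\,\mathcal{X}
     = -\big(D\Psi\circ\varphi\big)\,\big(\mathcal{X}\circ\varphi\big)
     = -(\mathcal{X}\Psi)\circ\varphi .
\]
Because $\varphi$ fixes the resonant monomial, $\varphi^{*}(x^{q}y^{p})=(y^{p/q})^{q}(x^{q/p})^{p}=x^{q}y^{p}$, and $\mathcal{X}\Psi=\sum_{k\ge1}g_{kq,kp}(x^{q}y^{p})^{k+1}$ is a series in $x^{q}y^{p}$ with constant coefficients, the right-hand side is just $-\mathcal{X}\Psi$. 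Hence $\mathcal{X}\big(\Psi+\Psi\circ\varphi\big)=0$.

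The crux, and the main obstacle, is to ensure that $\Psi\circ\varphi$ is again an honest power series in $x,y$ rather than a Puiseux series, since $\varphi$ introduces the fractional exponents $p/q$ and $q/p$. I would first show that reversibility confines \eqref{gspq} to a distinguished subfamily: comparing integer-power monomials on the two sides of \eqref{inv_yx} forces $a_{jk}=0$ unless $q\mid j$ and $p\mid k$, and $b_{kj}=0$ unless $p\mid j$ and $q\mid k$ (for $p=q=1$ this collapses to the classical $b_{kj}=a_{jk}$). A short computation then shows that, under these divisibility constraints, the operator $\mathcal{X}$ maps the subring $k[[x^{q},y^{p}]]$ into itself; its linear part $px\partial_x-qy\partial_y$ already preserves this subring and annihilates exactly the powers of $x^{q}y^{p}$. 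Since $\Psi$ is produced from $x^{q}y^{p}\in k[[x^{q},y^{p}]]$ by solving the homological equations inside this $\mathcal{X}$-invariant subring, I get $\Psi=\sum_{\alpha,\beta} v_{\alpha\beta}\,x^{q\alpha}y^{p\beta}\in k[[x^{q},y^{p}]]$, and therefore $\Psi\circ\varphi=\sum_{\alpha,\beta} v_{\alpha\beta}\,x^{q\beta}y^{p\alpha}$ has integer exponents, with leading term $x^{q}y^{p}$.

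It then follows that $\hat\Psi:=\tfrac12\big(\Psi+\Psi\circ\varphi\big)=x^{q}y^{p}+(\text{terms of degree}>p+q)$ is a formal power series of the form \eqref{pqInt} with $\mathcal{X}\hat\Psi=0$. The existence of such a formal first integral with nonzero leading term forces every obstruction to vanish: were $g_{mq,mp}$ the first nonzero saddle quantity, the term $g_{mq,mp}(x^{q}y^{p})^{m+1}$ could not be cancelled in the remainder, contradicting $\mathcal{X}\hat\Psi=0$. Thus $g_{kq,kp}=0$ for all $k$, and by the integrability criterion recalled before the theorem, system \eqref{gspq} admits an analytic first integral of the form \eqref{pqInt} near the origin. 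I expect essentially all the work to sit in the second step: pinning down the support of $\Psi$ so that the non-smooth substitution $\varphi$ preserves integrality of exponents; once $\Psi\in k[[x^{q},y^{p}]]$ is established, the symmetrization and the appeal to the criterion are routine.
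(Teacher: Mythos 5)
Your proposal is correct, and it reaches the conclusion by a genuinely different route in both technical steps. The paper's proof first restricts the coefficients of a reversible system (Lemma \ref{lem1}), then invokes the $(j,k)$-polynomial theorem of \cite{RS_BMS} to show that $\Psi$ is supported on monomials $x^{q(t_1+1)}y^{p(t_2+1)}$ (Lemma \ref{lem2}), and finally proves by induction on the recursion \eqref{vk1k2pq}, using the coefficient relations \eqref{abpq}, that $\Psi$ is \emph{exactly} $\varphi$-invariant, $v_{qt_1,pt_2}=v_{qt_2,pt_1}$ (Theorem \ref{th2}); only then does reversibility give $\mathcal{X}\Psi=-\mathcal{X}\Psi\equiv 0$. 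You replace both steps: the support statement follows from your elementary observation that the reversibility-forced constraints make $\mathcal{X}$ preserve the subring $k[[x^q,y^p]]$, inside which the homological recursion can be solved (the resonant monomials, the powers of $x^qy^p$, already lie there), so no appeal to the $(j,k)$-polynomial machinery is needed; and instead of proving $\Psi\circ\varphi=\Psi$ you symmetrize, noting that $\hat\Psi=\tfrac12(\Psi+\Psi\circ\varphi)$ is again of the form \eqref{pqInt} and is annihilated by $\mathcal{X}$, thanks to the chain-rule identity $\mathcal{X}(\Psi\circ\varphi)=-(\mathcal{X}\Psi)\circ\varphi$ together with the $\varphi$-invariance of series in $x^qy^p$ --- the same identity the paper uses, but applied without needing the symmetry of $\Psi$ itself. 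What the paper's longer route buys is the coefficient symmetry $v_{qt_1,pt_2}=v_{qt_2,pt_1}$ as a statement of independent interest; what yours buys is brevity and self-containedness. Two points to tighten: the chain-rule identity is only meaningful once $\Psi\circ\varphi$ is known to be an honest power series, so logically your second step must precede the first (as you yourself acknowledge); and your closing claim that a formal first integral of the form \eqref{pqInt} forces every $g_{kq,kp}$ to vanish should be supported by the standard first-nonzero-obstruction induction or by a citation --- this is the formal half of the criterion quoted before the theorem, and the paper handles the analogous gap by citing \cite{RS_BMS} for the passage from a formal to an analytic first integral.
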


To prove the theorem we will need the following results.

\begin{lm}\label{lem1}
System \eqref{gspq} with $p$ or $q$ different from 1 is time-reversible with respect to \eqref{inv_yx} if and only if
\be\label{abpq}
b_{qv,up}=\frac qp a_{qu,pv}, \qquad a_{qu,pv}=\frac pq  b_{qv,pu},
\ee
where $u, v=0,1,2,\dots$ and the other coefficients in \eqref{gspq} are equal to zero.
\end{lm}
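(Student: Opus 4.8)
The plan is to translate the abstract time-reversibility condition \eqref{inv_yx} into explicit relations on the coefficients $a_{jk},b_{kj}$ of the vector field \eqref{gspq}, by directly computing both sides of the identity $D_\varphi^{-1}\cdot \mathcal{X}\circ\varphi = -\mathcal{X}$ and matching coefficients of each monomial.

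First I would write the involution \eqref{inv_pq} as $\varphi(x,y)=(y^{p/q},x^{q/p})$ and compute its Jacobian $D_\varphi$ together with the inverse $D_\varphi^{-1}$; since $\varphi$ is a pure power map, these are diagonal-times-permutation matrices with entries that are monomials in fractional powers of $x,y$. Then I would form the vector field pulled back through $\varphi$, namely $\mathcal{X}\circ\varphi$, by substituting $x\mapsto y^{p/q}$, $y\mapsto x^{q/p}$ into the two components of \eqref{gspq}. The next step is to apply $D_\varphi^{-1}$ on the left and simplify: because of the resonance exponents chosen in \eqref{gspq}, the fractional powers are expected to recombine so that the result is again a formal power series of the same shape \eqref{gspq}, with transformed coefficients. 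I expect the leading linear part $px\,\partial_x - qy\,\partial_y$ to be mapped to its negative automatically (this is exactly where the specific form $\varphi(x,y)=(y^{p/q},x^{q/p})$ and the $p:-q$ resonance are used), while the nonlinear terms produce a relabelling of the coefficients.

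Matching the identity $D_\varphi^{-1}\cdot\mathcal{X}\circ\varphi = -\mathcal{X}$ term by term then yields, for each monomial appearing, a linear relation between a coefficient of the $\dot x$ component and a coefficient of the $\dot y$ component. Carefully bookkeeping the index substitution induced by the map (the exponent $j$ in $x^{j+1}y^k$ gets sent to a new pair of indices via the fractional powers) should produce precisely the paired relations \eqref{abpq}, with the factors $q/p$ and $p/q$ arising from the diagonal entries of $D_\varphi^{-1}$. The condition $j+k\ge 1,\ j\ge -1$ and the requirement that the transformed series again lie in the family \eqref{gspq} should force every coefficient \emph{not} of the special index form $(qu,pv)$ to vanish, which is the last clause of the lemma; this is where the hypothesis that $p$ or $q$ differs from $1$ matters, since otherwise all indices are admissible and no coefficients are forced to zero.

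The main obstacle I anticipate is the careful handling of the fractional exponents $p/q$ and $q/p$: I must verify that after applying $\varphi$ and $D_\varphi^{-1}$ every fractional power cancels so that $D_\varphi^{-1}\cdot\mathcal{X}\circ\varphi$ is an honest (integer-exponent) vector field of the form \eqref{gspq}, and determine exactly which monomials survive. This amounts to showing that a monomial $x^{j}y^{k}$ in a component of $\mathcal{X}$ can be matched by the pullback only when its indices are multiples of $q$ and $p$ respectively (more precisely of the form appearing in \eqref{abpq}); the non-admissible monomials then cannot be balanced on both sides, forcing their coefficients to zero. Once the index combinatorics is pinned down, equating coefficients is routine and produces \eqref{abpq}, and the converse direction follows by reading the same computation backwards: substituting the relations \eqref{abpq} into the transformed field reproduces $-\mathcal{X}$, establishing the equivalence.
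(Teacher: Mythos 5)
Your proposal follows essentially the same route as the paper's proof: both transform the vector field under the involution (the paper via the direct substitution $x_1=y^{p/q}$, $y_1=x^{q/p}$ into the differential equations, you via $D_\varphi^{-1}\cdot\mathcal{X}\circ\varphi$, which is the same computation), observe that the linear part reverses sign automatically, and then match monomials, where the requirement that exponents of the form $qj_1/p$ and $pk_1/q$ be non-negative integers (using $GCD(p,q)=1$) forces the index form $(qu,pv)$, the relations \eqref{abpq}, and the vanishing of all remaining coefficients. The paper isolates the role of the hypothesis that $p$ or $q$ differs from $1$ by noting that the exponent $-1$ case is impossible unless $p=q=1$, which is the same point you make in terms of admissibility of indices.
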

\begin{proof}
Using involution \eqref{inv_pq}, that is, performing
the substitution
\be \label{subs_x1y1}
x_1=y^{p/q}, \qquad y_1 = x^{q/p},
\ee
after  straightforward calculations we obtain
\begin{equation} \label{gspq_11}
\begin{aligned}
\dot x_1 &= \phantom{}  -  p x_1 (1 - \sum_{j_1+k_1 \ge 1, j_1\ge -1}^\infty
                                \frac 1q  b_{k_1j_1}x_1^{\frac{q j_1}p}y_1^{\frac {pk_1}q }) \\
\dot y_1 &=          \phantom{-} q y_1 (1 - \sum_{ j_1+k_1\ge 1, j_\ge -1  }^\infty
                               \frac 1p    a_{j_1 k_1}x_1^{\frac {q k_1}p}{y_1}^{\frac{pj_1}q}).
\end{aligned}
\end{equation}

In view of \eqref{inv_yx} it should hold
\be \label{eq_ab}
\frac 1p a_{jk} x^j y^k =  \frac  1q  b_{k_1j_1}x^{\frac{q j_1}p}y^{\frac {pk_1}q }
\ee
where the exponents on the right hand side should be non-negative integers
or
$$
\frac{qj_1}{p}= \frac{pj_1}{q}=-1.
$$
However the latter equality is impossible unless $p=q=1$.
Thus, \eqref{eq_ab} can take place  if we set  $ j_1=p u,\ k_1=q v$, where
$u, v=0,1,2,\dots.$ This yields formulae \eqref{abpq}.
\end{proof}

The following statement follows directly from \eqref{abpq}.
\begin{cor}
If system \eqref{gspq} with $p$ or $q$ different from 1 is time-reversible with respect to \eqref{inv_yx} then $x=0$ and $y=0$ are the separatrixes of the system.
\end{cor}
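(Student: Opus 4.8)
The plan is to read the conclusion straight off the coefficient constraints \eqref{abpq} furnished by Lemma~\ref{lem1}. Since the linear part of \eqref{gspq} is $\mathrm{diag}(p,-q)$, the coordinate axes $\{x=0\}$ and $\{y=0\}$ are precisely the two eigendirections of the saddle; hence to prove that they are the separatrices it suffices to show that each is an invariant curve of the flow, i.e.\ that $\dot x$ is divisible by $x$ and $\dot y$ is divisible by $y$.

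First I would locate the only terms that could violate this divisibility. In the first equation of \eqref{gspq} the index runs over $j\ge -1$, so the one dangerous value is $j=-1$: the monomial $a_{-1,k}x^{j+1}y^{k}=a_{-1,k}y^{k}$ carries no factor of $x$ and would keep $\{x=0\}$ from being invariant. Symmetrically, in the second equation the value $j=-1$ yields $b_{k,-1}x^{k}$, which carries no factor of $y$. Every other monomial already contains a positive power of $x$ (respectively $y$).

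Next I would invoke \eqref{abpq}: time-reversibility forces every coefficient other than $a_{qu,pv}$ and $b_{qv,up}$ with $u,v\in\{0,1,2,\dots\}$ to vanish. The obstructing coefficients have first index $-1$ (for $a$) or second index $-1$ (for $b$), and $-1$ is not a non-negative multiple of $q$ nor of $p$; hence $a_{-1,k}=0$ and $b_{k,-1}=0$ for all $k$. With these removed, every surviving monomial of $\dot x=px-\sum a_{jk}x^{j+1}y^{k}$ carries a factor $x^{j+1}$ with $j\ge 0$, and every surviving monomial of $\dot y=-qy+\sum b_{kj}x^{k}y^{j+1}$ carries a factor $y^{j+1}$ with $j\ge 0$. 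Therefore $\dot x$ is divisible by $x$ and $\dot y$ by $y$, so both axes are invariant and, coinciding with the eigendirections of the saddle, are exactly its separatrices.

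I do not expect a genuine obstacle here, since the statement is an immediate corollary of Lemma~\ref{lem1}. The only point needing a moment's care is the verification that $j=-1$, which is the sole exponent in the admissible range $j\ge -1$ that could spoil divisibility by $x$ or by $y$, is never selected by the constraints \eqref{abpq}.
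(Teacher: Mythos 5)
Your proof is correct and takes the same route as the paper, which simply states that the corollary follows directly from \eqref{abpq}: the constraints of Lemma~\ref{lem1} force all coefficients $a_{-1,k}$ and $b_{k,-1}$ to vanish (since $-1$ is not a non-negative multiple of $q$ or $p$), so both axes are invariant. Your write-up just makes this one-line observation explicit.
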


{\it Remark.}  Formulas \eqref{abpq} and the results obtained below remain valid also
in the case of $1:-1$ resonant singular points. But since the results in the $1:-1$ resonant
case are known, below we work under the assumption $p/q\ne 1$ taking advantage from the fact
that in such case the subscripts of the parameters  $a_{jk}, \ b_{kj}$ of \eqref{gspq}
are non-negative.

We augment the set of coefficients in \eqref{pqInt} with the collection
\[
J = \{v_{-q+s,q-s} : s  =0, \dots, p+q\},
\]
where, in agreement with formula \eqref{pqInt}, we set $v_{00} = 1$ and
$v_{mn} = 0$ for all other elements of $J$, so that elements of $J$ are the
coefficients of the terms of degree $p + q$ in $\Psi(x,y)$ of the form \eqref{pqInt}.

By \cite[p. 117]{RS}
the  coefficients $v_{k_1,k_2}$ of series \eqref{pqInt}
 can be   computed recursively using  the formula
\be\label{vk1k2pq}
v_{k_1,k_2} =
\begin{cases}
\frac{1}{p k_1 - q k_2}
    {\displaystyle
    \sum_{\substack{s_1 + s_2 = 0 \\ s_1\ge -q, s_2 \ge -p}}^{k_1 + k_2 - 1}
    }
    [(s_1 + q) a_{k_1 - s_1, k_2 - s_2}
    -
     (s_2 + p) b_{k_1 - s_1, k_2 - s_2}]
    v_{s_1,s_2}
    &\text{if $ p k_1 \ne q k_2$} \\
0   &\text{if $p  k_1  = q k_2$,}
\end{cases}
\ee
(in \cite{RS} formula \eqref{vk1k2pq} was obtained for the case of polynomial
system \eqref{gspq} but, obviously, it remains valid also in the case
when the right hand sides of \eqref{gspq} are series).

We order the index set of parameters $a_{jk}$  in the first  equation of  \eqref{gspq} in some manner, say by
degree lexicographic order from least to greatest, and write the ordered set
 as $S = \{(1, 0), (0,1), (-1,2), (2,0),  \dots  \}$. Consistently with this
we then order the parameters as
$
(a_{10}, a_{01}, a_{-1,2}, a_{20}, \dots,
b_{02}, b_{2,-1},b_{10},b_{01})
$
so that any monomial appearing in $v_{ij}$ has the form
$
a_{10}^{\nu_1}  a_{01}^{\nu_2} \cdots a_{s, t}^{\nu_\ell}
b_{t, s}^{\nu_{\ell+1}} \cdots b_{10}^{\nu_{2\ell-1}} b_{01}^{\nu_{2\ell}}
$
for some $\nu = (\nu_1, \dots, \nu_{2\ell})$ and $\ell=1,2,\dots.$ To simplify the notation, for
$\nu \in \np^{2\ell}$  we write
\be \label{nu_gs}
[ \nu ]
\stackrel{\text{def}}{=}
a_{10}^{\nu_1}  a_{01}^{\nu_2} \cdots a_{s, t}^{\nu_\ell}
b_{t, s}^{\nu_{\ell+1}} \cdots b_{10}^{\nu_{2\ell-1}} b_{01}^{\nu_{2\ell}},
\ee
so if the $k$-th variable  in the product is $a_{pq}, $ then $2\ell -k+1$-st variable 
is $b_{qp}$.

For each  $m \in \mathbb{N}$ we consider the finite  subset $S_m$ of
the set $S$  which corresponds to the case when system \eqref{gspq} is
a polynomial system of degree $m$, so
$$
S_m = \{(1, 0), (0,1), (-1,2), (2,0),  \dots, (-1,m)  \}.
$$
Denote by $\ell(m)$ the number of elements in $S_m$
 and  let
$L^m : \np^{2\ell(m)} \to \Z^2$ be the linear map defined
by
\begin{multline} \label{Ldef}
L^m (\nu)
= (L_1^m (\nu), L^m_2(\nu))
      = \nu_1       (1, 0)  +  \nu_2 (0,1)  + \cdots \nu_{\ell(m)} (-1,m)+\\
       \nu_{\ell(m)+1} (m,-1)+  \cdots +  \nu_{2 \ell(m)-1}
         (1, 0)+  \nu_{2 \ell(m)}   (0, 1).
\end{multline}
Let  $k[a,b]$ be  the ring of polynomials in parameters $a_{jk}, b_{jk}$ of
system \eqref{gspq} over the field $k$
and for $f \in k[a,b]$ we write $f = \sum_{\nu \in \Supp(f)}f^{(\nu)}[\nu]$,
where $\Supp(f)$ denotes those $\nu \in \np^{2\ell(m)}$, $m=1,2,\dots ,$  that the
coefficient of $[\nu]$ in the polynomial $f$ is nonzero.


\begin{definition}\label{jkpoly}
For $(j,k) \in \N_{-1} \times \N_{-1}$, a polynomial
$$
f = \sum_{\nu \in \Supp(f)}f^{(\nu)}[\nu]$$
 in the polynomial ring  $\C[a,b]$ is a
$(j,k)$-\emph{polynomial} if, for every $\nu \in \Supp(f)$,
$L^\ell(\nu) = (j,k)$ for all sufficiently large $\ell$.
\end{definition}

The reader can consult  \cite[Section 3.4]{RS} for more details about $(j,k)$-\emph{polynomials}
in the case of polynomial system \eqref{gspq}.

From now on
we will limit our consideration to the  systems of the form
\begin{equation} \label{gs_uv_inf}
\begin{aligned}
\dot x &= \phantom{-} x (p  - \sum_{t_1+t_2= 1}^\infty
                                  a_{qt_1, pt_2 }x^{qt_1}y^{pt_2} ), \\
\dot y &=            -y( q - \sum_{ t_1+t_2= 1 }^\infty
                                  b_{qt_2,pt_1}x^{qt_2}{y}^{pt_1}).
\end{aligned}
\end{equation}

By Lemma \ref{lem1}, in the case when $p/q\ne 1$  systems \eqref{gspq}, which are time-reversible
with respect to involution \eqref{inv_pq},  form a subfamily
of systems \eqref{gs_uv_inf}, so we do not lose generality working with  family
\eqref{gs_uv_inf} if we are interesting in time-reversibility with respect to \eqref{inv_pq}.

\begin{lm}\label{lem2}
For system \eqref{gs_uv_inf}
if $v_{k_1,k_2} $ is a non-zero coefficient of  series \eqref{pqInt} computed by \eqref{vk1k2pq}, then
\be \label{t1t2}
k_1=t_1 q, \ k_2 =t_2 p
\ee
 for some non-negative integer $t_1, t_2$.
Moreover, under  involution \eqref{inv_yx} the term
\be \label{vt1t2}
v_{q t_1,p t_2} x^{q t_1+q} y^{p t_2+p}
\ee
of  \eqref{pqInt}
is changed to the term
\be \label{vt2t1}
v_{q t_2,p t_1} x^{q t_2+q} y^{p t_1+p},
\ee
and vice versa.
\end{lm}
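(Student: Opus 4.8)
The plan is to prove two things: first, the "quantization" claim that nonzero coefficients $v_{k_1,k_2}$ can only occur at indices $(k_1,k_2)=(t_1q,t_2p)$; and second, the symmetry claim describing how the involution \eqref{inv_yx} swaps the corresponding terms of $\Psi$.

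Let me think about the first part. The recursion \eqref{vk1k2pq} expresses $v_{k_1,k_2}$ as a sum over earlier coefficients $v_{s_1,s_2}$ weighted by parameters $a_{k_1-s_1,k_2-s_2}$ and $b_{k_1-s_1,k_2-s_2}$. For system \eqref{gs_uv_inf}, the only nonzero parameters are $a_{qt_1,pt_2}$ and $b_{qt_2,pt_1}$ — that is, the first subscript is a multiple of $q$ and the second a multiple of $p$. So the plan is an induction on $k_1+k_2$. The base case is $v_{00}=1$, which sits at $(0,0)=(0\cdot q,0\cdot p)$. For the inductive step, suppose every nonzero $v_{s_1,s_2}$ appearing in the sum has $s_1=\sigma_1 q$, $s_2=\sigma_2 p$. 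A term in \eqref{vk1k2pq} is nonzero only if the parameter $a_{k_1-s_1,k_2-s_2}$ (or $b_{k_1-s_1,k_2-s_2}$) is nonzero, which by \eqref{gs_uv_inf} forces $k_1-s_1$ to be a multiple of $q$ and $k_2-s_2$ to be a multiple of $p$. Adding $s_1=\sigma_1 q$ gives $k_1\equiv 0 \pmod q$, and likewise $k_2\equiv 0\pmod p$. Hence $k_1=t_1 q$, $k_2=t_2 p$, completing the induction. This is the cleaner of the two parts.

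For the second part I would track what the substitution \eqref{subs_x1y1} does to a single monomial of $\Psi$. By part one, the generic term of \eqref{pqInt} is $v_{qt_1,pt_2}\,x^{qt_1+q}y^{pt_2+p}$, matching \eqref{vt1t2} (the shift by $q$ in the $x$-exponent and $p$ in the $y$-exponent comes from the leading factor $x^q y^p$ in \eqref{pqInt}). Applying $\varphi$, i.e. replacing $x\mapsto y^{p/q}$, $y\mapsto x^{q/p}$, I would compute the new exponents: the factor $x^{qt_1+q}$ becomes $y^{(p/q)(qt_1+q)}=y^{pt_1+p}$, and $y^{pt_2+p}$ becomes $x^{(q/p)(pt_2+p)}=x^{qt_2+q}$. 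So the monomial $x^{qt_1+q}y^{pt_2+p}$ maps to $x^{qt_2+q}y^{pt_1+p}$, which is exactly the monomial carrying the coefficient $v_{qt_2,pt_1}$ in \eqref{vt2t1}. The roles of $t_1$ and $t_2$ are interchanged, and applying $\varphi$ twice returns the original monomial, which is why the exchange holds "and vice versa." The key point making the exponents come out as integers is precisely the restriction to family \eqref{gs_uv_inf}, where exponents are locked to multiples of $q$ and $p$.

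The main obstacle I anticipate is bookkeeping rather than conceptual: being careful that the fractional substitution $x\mapsto y^{p/q}$ genuinely produces integer exponents, which relies on the indices being multiples of $q$ and $p$ as established in part one, and on the $+q$, $+p$ shifts from the leading term $x^q y^p$ aligning correctly. I would therefore present part one first and invoke it explicitly when verifying the exponent arithmetic in part two. A secondary subtlety worth a sentence is that the coefficient relabeling $v_{qt_1,pt_2}\leftrightarrow v_{qt_2,pt_1}$ is a statement about which monomial each coefficient attaches to after the substitution, not yet a claim that these coefficients are equal; the latter is what the subsequent argument for Theorem \ref{th1} will exploit.
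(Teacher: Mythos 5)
Your proof is correct, but for the divisibility claim \eqref{t1t2} you take a genuinely different route from the paper. The paper's proof is structural: it observes that for family \eqref{gs_uv_inf} the map \eqref{Ldef} factors as $L^{qm}(\nu)=(qL_1^m(\nu),pL_2^m(\nu))$, and then invokes Theorem~4 of \cite{RS_BMS} (that $v_{k_1,k_2}$ is a $(k_1,k_2)$-polynomial) to conclude that $q\mid k_1$ and $p\mid k_2$; this requires the accompanying remark that the cited theorem, stated for polynomial systems, extends to the series case. Your argument instead runs a strong induction on $k_1+k_2$ directly through the recursion \eqref{vk1k2pq}: the base case is the augmented coefficients (only $v_{00}=1$ nonzero), and in the inductive step a term survives only if some parameter $a_{k_1-s_1,k_2-s_2}$ or $b_{k_1-s_1,k_2-s_2}$ is not identically zero, which for family \eqref{gs_uv_inf} forces $k_1-s_1$ to be a non-negative multiple of $q$ and $k_2-s_2$ a non-negative multiple of $p$; combined with the inductive hypothesis on $(s_1,s_2)$ this gives \eqref{t1t2}, including non-negativity of $t_1,t_2$ (and it automatically shows that all coefficients with a negative index vanish). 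What your approach buys is self-containedness: it avoids the external citation and its extension to series, at the cost of a slightly longer bookkeeping argument; what the paper's approach buys is brevity and a connection to the $(j,k)$-polynomial machinery that is reused later (e.g.\ in the proof of Theorem~\ref{th2}). For the second claim both proofs are identical: substitute \eqref{subs_x1y1} into the monomial \eqref{vt1t2} and check the exponent arithmetic, and you are right to flag that this only relabels which monomial each coefficient is attached to --- the equality $v_{qt_1,pt_2}=v_{qt_2,pt_1}$ is the content of Theorem~\ref{th2}, not of this lemma.
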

\begin{proof}
For system \eqref{gs_uv_inf} the linear  map \eqref{Ldef} can be written in the
form
$$
L^{q m }(\nu)=(q L_1^m(\nu), p L_2^m(\nu)),
$$
where $(L_1^m(\nu),  L_2^m(\nu)$ is defined by \eqref{Ldef} and $L^{q m-q+1 }(\nu)=
\dots =  L^{q m-1 }(\nu)=  L^{q m }(\nu) $.
By Theorem 4 of \cite{RS_BMS}   $v_{k_1k_2}$
is a $(k_1,k_2)$ polynomial. Therefore for each monomial $[\nu]$  of
 $v_{k_1k_2}$ it holds that
 $$
(q L_1^m(\nu), p L_2^m(\nu)) =(k_1,k_2)
$$
for all sufficiently large $m$.
It means that $q$ divides $k_1$ and $p$ divides $k_2$, that is, \eqref{t1t2}
holds.

Performing in \eqref{vt1t2} substitution \eqref{subs_x1y1} we see that
\eqref{vt2t1} holds.
\end{proof}

{\it Remark.}  Theorem 4 of \cite{RS_BMS} mentioned above   was formulated in \cite{RS_BMS} for the case of polynomial systems \eqref{gspq} but it remains correct also in the case when the right
hand sides of \eqref{gspq} are series.

\begin{theorem}\label{th2}
  The formal series \eqref{pqInt}
computed according to \eqref{vk1k2pq} is unchanged under involution \eqref{inv_yx},
that is, in view of Lemma \ref{lem2},
\be \label{v_cond}
v_{q t_1,p t_2}= v_{q t_2,p t_1}.
\ee
\end{theorem}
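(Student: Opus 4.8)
The plan is to show that the coefficients $v_{qt_1,pt_2}$ and $v_{qt_2,pt_1}$ satisfy the same recursion and hence coincide. The cleanest approach exploits the symmetry of the time-reversibility conditions \eqref{abpq} together with the recursion \eqref{vk1k2pq}. Under the time-reversibility assumption, Lemma \ref{lem1} gives $b_{qv,pu}=\frac{q}{p}a_{qu,pv}$ and $a_{qu,pv}=\frac{p}{q}b_{qv,pu}$, so the two families of parameters are rigidly linked by the index swap $(u,v)\mapsto(v,u)$. My strategy is to prove \eqref{v_cond} by induction on $t_1+t_2$ (equivalently on $k_1+k_2$), showing that if all lower-order coefficients are symmetric under the swap $k_1\leftrightarrow k_2$ (up to the $q,p$ scaling), then so is $v_{k_1,k_2}$.

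First I would set up the base case: by the normalization $v_{00}=1$, and the coefficients of degree $p+q$ in the set $J$ are fixed, so symmetry holds trivially at the bottom. Next, for the inductive step I would take the recursion \eqref{vk1k2pq} for $v_{k_1,k_2}$ with $k_1=qt_1$, $k_2=pt_2$, and compare it term by term with the recursion for $v_{k_2',k_1'}$ where $k_1'=qt_2$, $k_2'=pt_1$ — that is, the coefficient $v_{qt_2,pt_1}$. The key observation is that the prefactor $\frac{1}{pk_1-qk_2}$ becomes $\frac{1}{p\cdot qt_1-q\cdot pt_2}=\frac{1}{pq(t_1-t_2)}$, while for the swapped coefficient the prefactor is $\frac{1}{pq(t_2-t_1)}$, so the two prefactors are exactly negatives of each other. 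I would then show that the summation on the right-hand side also changes sign under the swap, using two ingredients: (i) the relations \eqref{abpq} convert each $a$ into the corresponding swapped $b$ (and vice versa) with the factors $\frac{q}{p}$ and $\frac{p}{q}$, and (ii) the combination $(s_1+q)a_{\ldots}-(s_2+p)b_{\ldots}$ transforms into its negative once the roles of the two indices are interchanged and the inductive hypothesis $v_{s_1,s_2}=v_{s_2',s_1'}$ is applied to the lower-order factors appearing in the sum.

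The main obstacle I anticipate is bookkeeping the index changes carefully so that the correspondence between terms in the two recursions is genuinely a bijection, and verifying that the factor structure $(s_1+q)$ versus $(s_2+p)$ aligns correctly after the substitution $a_{qu,pv}\leftrightarrow b_{qv,pu}$. One must track that when a summand contributes $(s_1+q)a_{k_1-s_1,k_2-s_2}v_{s_1,s_2}$ to $v_{k_1,k_2}$, the corresponding summand in the recursion for $v_{qt_2,pt_1}$ is $-(s_2+p)b_{\ldots}v_{s_2,s_1}$ evaluated at the swapped indices, and that \eqref{abpq} together with the inductive hypothesis makes these equal. An alternative and conceptually cleaner route, which I would also consider, is to bypass the recursion entirely: since \eqref{pqInt} is \emph{uniquely} determined by the requirement \eqref{pqbar4} (the off-resonant coefficients are forced, and the resonant ones are set to zero), I would argue that the series $\Psi\circ\varphi$ — obtained by applying the involution \eqref{inv_pq} to $\Psi$ — also satisfies the defining property \eqref{pqbar4} for the reversed field. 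By Lemma \ref{lem2} the involution acts on the terms of $\Psi$ precisely by swapping $v_{qt_1,pt_2}x^{qt_1+q}y^{pt_2+p}$ with $v_{qt_2,pt_1}x^{qt_2+q}y^{pt_1+p}$, so $\Psi\circ\varphi$ is again a series of the form \eqref{pqInt} whose coefficients are the swapped $v$'s; uniqueness of such a series then forces \eqref{v_cond}. This second approach relegates the delicate index computation to the already-established Lemma \ref{lem2} and to the uniqueness of the normalizing transformation, and I expect it to be the more robust path, with the uniqueness claim being the one point requiring careful justification.
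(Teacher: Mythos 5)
Your primary argument --- induction on $t_1+t_2$, the observation that the prefactors $\frac{1}{pq(t_1-t_2)}$ and $\frac{1}{pq(t_2-t_1)}$ are negatives of each other, and the check that \eqref{abpq} turns each summand into minus the corresponding summand, so that the two sign changes cancel once the induction hypothesis is applied to the lower-order coefficients --- is exactly the paper's proof, including the point you flag as the main bookkeeping obstacle: the paper uses the $(j,k)$-polynomial property (Lemma \ref{lem2}) to restrict the summation in \eqref{vk1k2pq} to indices of the form $(q\tilde s_1,p\tilde s_2)$, which is precisely what makes your term-by-term bijection legitimate. So the first route is correct and coincides with the paper's.

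Your alternative route is genuinely different and also sound, so a comparison is worthwhile. The paper separates the two steps: it first proves $\Psi\circ\varphi=\Psi$ by the recursion comparison (Theorem \ref{th2}) and only then, in the proof of Theorem \ref{th1}, applies $\mathcal{X}$ and reversibility to conclude $\mathcal{X}\Psi=-\mathcal{X}\Psi\equiv 0$. Your uniqueness argument merges these: from \eqref{inv_yx} one gets the chain-rule identity $\mathcal{X}(\Psi\circ\varphi)=-(\mathcal{X}\Psi)\circ\varphi$; since $x^qy^p$ is invariant under $\varphi$ and, by Lemma \ref{lem2}, $\Psi\circ\varphi$ is again a series of the form \eqref{pqInt} whose resonant coefficients vanish, $\Psi\circ\varphi$ satisfies the same normalization conditions as $\Psi$, and uniqueness of the distinguished series (off-resonant coefficients forced by \eqref{vk1k2pq}, resonant ones set to zero) yields $\Psi\circ\varphi=\Psi$, and as a by-product $g_{qk,pk}=-g_{qk,pk}=0$, i.e.\ Theorem \ref{th1} as well. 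What this buys is a proof with essentially no index bookkeeping, giving both theorems at once; what it costs is that the two points you rightly single out --- the uniqueness statement and the formal chain-rule computation involving the fractional-power involution --- must be written out carefully, and they are exactly where reversibility and Lemma \ref{lem2} enter, playing the same role that \eqref{abpq} and the change of summation rule play in the paper's computation. Neither step is problematic, so I see no gap in either route.
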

\begin{proof}
We prove the claim using induction on $t_1+t_2$.
When $t_1=t_2=0$ we have by the  definition $v_{00}=1$, so the claim holds.

Using Lemma \ref{lem2}   we can write formula \eqref{vk1k2pq} as
\be\label{vk1k2pq_1}
v_{q t_1, p t_2} =
\begin{cases}
\frac{1}{pq  (t_1 - t_2)}
    {\displaystyle
    \sum_{\substack{s_1 + s_2 = 0 \\ s_1\ge -q, s_2 \ge -p}}^{q t_1 + p t_2 - 1}
    }
    [(s_1 + q) a_{q t_1 - s_1, p t_2 - s_2}
    -
     (s_2 + p) b_{q t_1 -  s_1, p t_2 - s_2}]
    v_{s_1,s_2}
    &\text{if $ t_1 \ne t_2$} \\
0   &\text{if $ t_1  = t_2$,}
\end{cases}
\ee

In view of \eqref{abpq} and taking into account that $v_{j,k}$  are $(j,k)$-polynomials
 for $t_1\neq t_2 $ we can change the rule of summation obtaining  from \eqref{vk1k2pq_1}
\be\label{vk1k2pq_2}
v_{q t_1, p t_2} =
\frac{1}{pq  (t_1 - t_2)}
    \sum_{s_1=0}^{q t_1}
   \sum_{s_2=0}^{p t_2}
    [(s_1 + q) a_{q t_1 - s_1, p t_2 - s_2}
    -
     (s_2 + p) b_{q t_1 -  s_1, p t_2 - s_2}]
    v_{s_1,s_2}=
\ee
\be \label{v_1}
\frac{1}{pq  (t_1 - t_2)}
    \sum_{\tilde s_1=0}^{t_1 }
   \sum_{\tilde s_2=0}^{ t_2}
    [(\tilde s_1 + 1) q a_{q (t_1 -\tilde s_1), p (t_2 -\tilde  s_2)}
    -
     (\tilde  s_2 + 1) p b_{q (t_1 - \tilde s_1), p (t_2 - s_2}]
    v_{q \tilde s_1,p \tilde s_2},
\ee
where $ s_1=q \tilde s_1,$ $  s_2=p \tilde s_2.$

Performing similar computations we have
\be\label{vk1k2pq_2_1}
v_{q t_2, p t_1} =
\frac{1}{pq  (t_2 - t_1)}
    \sum_{s_1=0}^{q t_2}
   \sum_{s_2=0}^{p t_1}
    [(s_1 + q) a_{q t_2 - s_1, p t_1 - s_2}
    -
     (s_2 + p) b_{q t_2 -  s_1, p t_1 - s_2}]
    v_{s_1,s_2}=
\ee
$$
\frac{1}{pq  (t_2 - t_1)}
    \sum_{\tilde s_2=0}^{t_2 }
   \sum_{\tilde s_1=0}^{ t_1}
    [(\tilde s_2 + 1) q a_{q (t_2 -\tilde s_2), p (t_1 -\tilde  s_1)}
    -
     (\tilde  s_1 + 1) p b_{q (t_2 - \tilde s_2), p (t_1 - \tilde s_1}]
    v_{q \tilde s_2,p \tilde s_1},
$$
where $ s_1=q \tilde s_2,$ $  s_2=p \tilde s_1.$

Using \eqref{abpq} we further obtain from \eqref{vk1k2pq_2_1}
$$
v_{q t_2, p t_1} =\frac{1}{pq  (t_2 - t_1)}
    \sum_{\tilde s_2=0}^{t_2 }
   \sum_{\tilde s_1=0}^{ t_1}
    [(\tilde s_2 + 1) p b_{q (t_1 -\tilde s_1), p (t_2 -\tilde  s_2)}
    -
     (\tilde  s_1 + 1) q a_{q (t_1 - \tilde s_1), p (t_2 - \tilde s_2}]
    v_{q \tilde s_2,p \tilde s_1}=
$$
\be \label{v_2}
\frac{1}{pq  (t_1 - t_2)}\sum_{\tilde s_1=0}^{ t_1}\sum_{\tilde s_2=0}^{t_2 }
[ (\tilde  s_1 + 1) q a_{q (t_1 - \tilde s_1), p (t_2 - \tilde s_2})-(\tilde s_2 + 1) p b_{q (t_1 -\tilde s_1), p (t_2 -\tilde  s_2)}
]  v_{q \tilde s_1,p \tilde s_2},
\ee
where we have changed  $v_{q \tilde s_2,p \tilde s_1}$
to $v_{q \tilde s_1,p \tilde s_2}$ using the induction hypothesis.
Comparing  the expressions for \eqref{v_1} and \eqref{v_2} we conclude that
\eqref{v_cond} holds, that is, the series   $\Psi(x,y)$
computed by \eqref{vk1k2pq} is unchanged under involution \eqref{inv_yx}.
\end{proof}

Using the obtained results we prove Theorem \ref{th1} as follows.

{\it Proof of Theorem \ref{th1}.} Denote by $\mathfrak{X}$ the vector field of
system \eqref{gs_uv_inf}. By Theorem \ref{th2} the series   $\Psi(x,y)$
computed by \eqref{vk1k2pq} is unchanged under involution \eqref{inv_pq}.

Assume that
$$
\mathfrak{X} \Psi =\alpha(x,y).
$$
Since by our assumption the system is time-reversible,
it also  holds that
$$
\mathfrak{X} \Psi =-\alpha(x,y),
$$
yielding $\alpha(x,y)\equiv 0$. That means,  $\Psi(x,y)$
is a formal first integral of \eqref{gspq}.
But then also there exists an analytic first integral of the form \eqref{pqInt}
(see e.g. \cite{RS_BMS}).  \hfill  $\square$

\begin{cor}
If system \eqref{gspq} is time-reversible  with respect to  involution \eqref{inv_pq},
then the system in the distinguished  Poincare-Dulac  normal form is also time-reversible  with respect to the same involution.
\end{cor}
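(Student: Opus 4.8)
The plan is to deduce the reversibility of the normal form from its integrability, exploiting the fact that for a $p:-q$ normal form these two properties are literally the same condition. Recall first that for the $p:-q$ resonance the only resonant monomials are $x\,(x^qy^p)^t$ in the first component and $y\,(x^qy^p)^t$ in the second, so the distinguished Poincar\'e--Dulac normal form of \eqref{gspq} is a member of the family \eqref{gs_uv_inf} carrying only the ``diagonal'' resonant terms. Writing $u=x^qy^p$, its vector field $\mathcal{X}$ has the shape
\be
\dot x = p\, x\,(1-\alpha(u)), \qquad \dot y = -q\, y\,(1-\beta(u)),
\ee
with $\alpha(u)=\sum_{t\ge1}A_t u^t$, $\beta(u)=\sum_{t\ge1}B_t u^t$, and with diagonal coefficients $a_{qt,pt}=pA_t$, $b_{qt,pt}=qB_t$. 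The whole argument then breaks into three short steps.

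First I would transfer integrability to the normal form. By Theorem \ref{th1} the reversibility of \eqref{gspq} with respect to \eqref{inv_pq} yields an analytic first integral of the form \eqref{pqInt}, so by the criterion recalled after \eqref{pqbar4} all saddle quantities $g_{kq,kp}$ of \eqref{gspq} vanish. Since integrability is preserved by the normalizing transformation, the distinguished normal form is again integrable, i.e.\ all its saddle quantities vanish. Next I would identify integrability of the normal form with the condition $\alpha\equiv\beta$: a direct computation gives $\mathcal{X}u = pq\,u\,(\beta(u)-\alpha(u))$, which is already a series in $u$ alone, so $u$ itself plays the role of $\Psi$ in \eqref{pqInt} and one reads off $g_{kq,kp}=pq\,(B_k-A_k)$. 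Hence the vanishing of every saddle quantity is equivalent to $A_t=B_t$ for all $t$, that is, to $\alpha\equiv\beta$.

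Finally I would check directly that any normal form with $\alpha\equiv\beta$ is time-reversible with respect to \eqref{inv_pq}. The essential point is that the resonant variable is $\varphi$-invariant, $u\circ\varphi=(y^{p/q})^{q}(x^{q/p})^{p}=x^qy^p=u$, so $\alpha$ and $\beta$ are unchanged by \eqref{inv_pq}. Substituting $\varphi$ into the normal form and multiplying by $D_\varphi^{-1}$, computed exactly as in the proof of Lemma \ref{lem1}, gives
\be
D_\varphi^{-1}\cdot \mathcal{X}\circ\varphi = \bigl(-p\, x\,(1-\beta(u)),\; q\, y\,(1-\alpha(u))\bigr),
\ee
whereas $-\mathcal{X}=\bigl(-p\, x\,(1-\alpha(u)),\; q\, y\,(1-\beta(u))\bigr)$; these two vector fields coincide precisely when $\alpha\equiv\beta$, which is what the previous step supplied. (Equivalently, $\alpha\equiv\beta$ is exactly the specialization of the reversibility criterion \eqref{abpq} to the diagonal coefficients $a_{qt,pt}=pA_t$, $b_{qt,pt}=qB_t$.) Combining the three steps proves the corollary. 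The one point that needs genuine care, and which I would regard as the main obstacle, is the first step: one must justify that passing to the distinguished normal form does not destroy integrability, i.e.\ that the vanishing of the saddle quantities established via Theorem \ref{th1} persists for the normal form; this is intrinsic to the theory since the saddle quantities are invariants of the normalization, but it is the only nonformal ingredient. Once it is granted, the remaining two steps are the explicit computations displayed above.
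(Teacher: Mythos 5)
Your proposal is correct and follows essentially the same route as the paper: Theorem \ref{th1} gives integrability of the original system, integrability forces the distinguished normal form to have identical series in both components (the paper cites \cite{RS_BMS,Z} for this fact, which you rederive by computing the saddle quantities $pq(B_k-A_k)$ of the normal form via $\mathcal{X}u=pq\,u(\beta(u)-\alpha(u))$), and the resulting symmetric normal form \eqref{normal} is then verified directly to be time-reversible with respect to \eqref{inv_pq}. The only difference is that you prove the middle step instead of citing it, correctly flagging the preservation of integrability under the normalizing transformation as the one nonformal ingredient, which is exactly what the paper's citation covers.
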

\begin{proof}
Since by  Theorem \ref{th1} any time-reversible system \eqref{gspq} is locally analytically  integrable,  its distinguished  normal form can be written as
\begin{equation}\label{normal}
\dot x= p x(1+\sum_{k=1}^\infty  g_k (x^q y^p)^k), \qquad
 \dot y= -q y (1+\sum_{k=1}^\infty  g_k (x^q y^p)^k).
\end{equation}
(see e.g. \cite{RS_BMS,Z}). Clearly, the latter system is
time-reversible with respect to \eqref{inv_pq}.
\end{proof}

{The normal form of any locally analytically integrable system \eqref{gspq} is given by (\ref{normal}). System (\ref{normal}) is time-reversible with respect to the involution \eqref{inv_pq}. Therefore any locally analytically integrable system \eqref{gspq} is conjugate to a time-reversible system, in the sence that there exists a change of variables $\phi$ that transforms the original  
 system to the normal form (\ref{normal}) and consequently the original system is time-reversible with  respect to the involution $ \bar{\psi} = \phi^{-1} \circ \psi \circ \phi$, where $\psi$ is  involution \eqref{inv_pq}. Hence the analytically integrability of system \eqref{gspq} is always associated with a time-reversal symmetry. In fact, all the nondegenerate centers are conjugate to a time-reversible system, and all the nilpotent centers are orbitally  time-reversible. This  does not happens for systems with null linear part, see \cite{GM}.}

{The problem  with the map $\bar{\psi}$ is that we have no idea about  the form of $\bar{\psi}$ not even the leading terms of such involution. Therefore from the found  results  we cannot deduce an algorithm based on the computation of the involution of the original system. However several methods to compute the saddle or focus quantities are known, see for instance \cite{FG,GG,GL,RS} and references therein.} 

{Nevertheless, always there exists a change $\phi$ such that any differential system \eqref{gspq} is transformed to its normal form
\begin{equation}
\dot{y_1} = p y_1 (1 + Y_1 ( y_1^q y_2^p)), \qquad  \dot{y_2} = -q y_2 (1 + Y_2 ( y_1^q y_2^p)).
\end{equation}
Next going through the change of variables $z_1=y_1^q$ and  $z_2= y_2^p$ the normal form is transformed to the normal form of the resonance $1:-1$ and the results known for such resonance can be applied to the $p:-q$ resonance.}

\section{Conditions of time-reversibility and the Sibirsky ideal}

In this section we propose an algorithmic approach
which allows for a given polynomial family \eqref{gs_uv_inf}
 to find the set of  systems which are time-reversible with respect to
 \eqref{inv_pq}. We also give a description of the set using the so-called
 Sibirsky ideal obtaining    some generalizations of the results of  \cite{JLR,R}).

We will limit our consideration to polynomial  systems of the form \eqref{gs_uv_inf},
that is, systems of the form
 \begin{equation} \label{gs_uv}
\begin{aligned}
\dot x &= \phantom{-} x (p  - \sum_{u+v= 1}^n
                                  a_{qu, pv }x^{qu}y^{pv} ), \\
\dot y &=            -y( q - \sum_{ u+v= 1 }^n
                                  b_{qv,pu}x^{qv}{y}^{pu}),
\end{aligned}
\end{equation}
assuming that  $p/q \neq 1$.

Denote by $\ell $ the number of parameters in the first equation of \eqref{gs_uv}.
For $k=1,\dots, \ell$ let
\be
\label{zeta}
\zeta_k=u_k-v_k
\ee
 and
consider   the  ideal
\be \label{idealH}
 H = \langle
    1 - w \al, a_{q u_k, p v_k} - t_k, \
b_{q v_k, p u_k} - \frac qp \al^{\zeta_k  } t_k \ : \quad
  k =  1, \ldots, \ell
\rangle.
\ee

\begin{pro} \label{prTR} {The following statements hold:}\newline

1) The Zariski closure of the  set of systems  in family \eqref{gs_uv}, which are time-reversible with respect to involution
\eqref{inv_pq} after the
  transformation
\be \label{trans_ort}
x\to \alpha x, \ y\to \alpha^{-1} y
\ee
with $\alpha \in \C\setminus\{0\}$,
is the variety ${\bf V}(\mathcal{I})$   of the  ideal
  \be \label{Ir}
  \mathcal{I}=H\cap \C [ a,b].
  \ee 

2) If the parameters $ a_{qu, pv},\  b_{qv, pu}$ of system
\eqref{gs_uv}   belong to the variety ${\bf V}(\mathcal{I})$, then the system
admits a local analytic first integral of the form
\eqref{pqInt}.
\end{pro}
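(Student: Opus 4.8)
The plan is to obtain Part~1 from Lemma~\ref{lem1} by elimination theory, and then deduce Part~2 almost for free by combining Part~1 with Theorem~\ref{th1} and the integrability criterion recalled after \eqref{pqbar4} (analytic first integral of the form \eqref{pqInt} exists $\iff$ all saddle quantities $g_{kq,kp}$ vanish). First I would record how the coefficients of \eqref{gs_uv} transform under the scaling \eqref{trans_ort}. Substituting $x=\alpha X$, $y=\alpha^{-1}Y$ and cancelling the prefactors coming from $\dot x=\alpha\dot X$ and $\dot y=\alpha^{-1}\dot Y$, each monomial coefficient merely acquires a power of $\alpha$: one finds $\tilde a_{qu,pv}=\alpha^{qu-pv}a_{qu,pv}$ and $\tilde b_{qv,pu}=\alpha^{qv-pu}b_{qv,pu}$, so the scaling preserves the monomial structure of \eqref{gs_uv}. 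By Lemma~\ref{lem1} the scaled system is time-reversible with respect to \eqref{inv_pq} exactly when $\tilde b_{qv,pu}=\tfrac qp\tilde a_{qu,pv}$ for every admissible index, which upon inserting the two scaling rules becomes
\[
b_{qv,pu}=\tfrac qp\,\alpha^{(p+q)(u-v)}\,a_{qu,pv}.
\]
Since $\C$ is algebraically closed the map $\alpha\mapsto\alpha^{p+q}$ is onto $\C\setminus\{0\}$, so replacing $\alpha^{p+q}$ by a fresh free parameter $\al\in\C\setminus\{0\}$ describes the same family of systems; the relations then read $b_{qv_k,pu_k}=\tfrac qp\,\al^{\zeta_k}a_{qu_k,pv_k}$ with $\zeta_k=u_k-v_k$. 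These are precisely the generators of $H$ in \eqref{idealH}, in which $t_k$ abbreviates $a_{qu_k,pv_k}$ and the auxiliary variable $w$ with relation $1-w\al$ enforces $\al\ne0$ (Rabinowitsch trick).

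Consequently the (not necessarily closed) set $U$ of systems \eqref{gs_uv} that become time-reversible after some admissible scaling is exactly the image of the variety $\vv(H)$ under the projection $(a,b,\al,t,w)\mapsto(a,b)$ onto parameter space (the fibre data $t_k=a_{qu_k,pv_k}$, $w=1/\al$ being determined, and every $\al\ne0$ arising from some $\alpha\ne0$). By the Closure Theorem of elimination theory the Zariski closure of this projection equals $\vv(H\cap\C[a,b])=\vv(\I)$, which is exactly assertion~1 (cf.\ the $1:-1$ construction in \cite{JLR,R,RS}).

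For Part~2 I would argue that integrability is a Zariski-closed condition already satisfied on the dense subset $U$. If $(a,b)\in U$, then for a suitable $\alpha\ne0$ the scaled system is time-reversible, hence by Theorem~\ref{th1} it admits an analytic first integral of the form \eqref{pqInt}; since \eqref{trans_ort} is an invertible linear map fixing the origin, pulling this integral back yields an analytic first integral of the original system (of the form \eqref{pqInt} up to a nonzero constant, the leading term $x^qy^p$ being only rescaled). By the criterion after \eqref{pqbar4} this forces $g_{kq,kp}(a,b)=0$ for every $k$, so $U\subseteq\vv(\la g_{kq,kp}:k\in\N\ra)$. The latter set is Zariski closed, hence contains $\overline U=\vv(\I)$; thus every parameter point of $\vv(\I)$ annihilates all saddle quantities, and the same criterion produces an analytic first integral of the form \eqref{pqInt}, which is assertion~2.

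The main obstacle is the passage in Part~1 from the pointwise, $\alpha$-dependent reversibility conditions to the ideal-theoretic statement: one must verify that the scaling preserves \eqref{gs_uv}, handle the exponent $(p+q)(u-v)$ correctly through the reparametrization $\al=\alpha^{p+q}$ (so that $\vv(\I)$ really is the closure of $U$ and not of a smaller set), and legitimately invoke the Closure Theorem, which requires working over the algebraically closed field $\C$. Part~2 is then comparatively routine; its only conceptual point is that ``existence of an analytic first integral'' is equivalent to the simultaneous vanishing of the entire family of saddle quantities and therefore cuts out a Zariski-closed set, which is precisely what lets the property survive the passage to the closure $\vv(\I)$.
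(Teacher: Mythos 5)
Your proposal is correct and follows essentially the same route as the paper: Part 1 is the same scaling computation combined with Lemma~\ref{lem1} and elimination theory (your Closure Theorem is the paper's Implicitization Theorem applied to the ideal $H$), and Part 2 is the same closure argument, where the paper cites Theorem 3.2.5 of \cite{RS} for the algebraicity of the integrability locus while you equivalently invoke the vanishing of all saddle quantities $g_{kq,pk}$. The only immaterial differences are the sign convention in the scaling exponents (direction of the substitution) and your explicit treatment of pulling the first integral back through \eqref{trans_ort}, which the paper leaves implicit.
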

\begin{proof}
Performing in system \eqref{gs_uv} transformation \eqref{trans_ort}
we obtain the system of the same shape with the parameters
$  a_{qu, pv},  b_{qv, pu} $ changed according to the rule
$$
 a_{qu, pv} \mapsto  \alpha^{pv -qu}  a_{qu, pv},
 \qquad b_{qv, pu} \mapsto   \alpha^{pu-qv} b_{qv, pu},
$$
where $u+v=1,\dots,n$.

By Lemma \ref{lem1} the  system obtained after transformation \eqref{trans_ort}  is  time-reversible with respect to involution
\eqref{inv_pq} if and only if for some $\alpha\ne 0$
\be \label{conpq}
 \alpha^{pv -qu}  a_{qu, pv} = \frac{p}{q}  \alpha^{pu-qv} b_{qv, pu},
\ee
where $u+v=1,\dots,n$.
Equivalently, we can rewrite \eqref{conpq} as
\be \label{conpq_1}
a_{q u_k, p v_k} = t_k, \quad
b_{q v_k, p u_k} - \frac qp \al^{\zeta_k} t_k,
\ee
 where $\al = \alpha^{-(p+q)}  $,  $k=1,\dots,\ell$ and $\zeta_k$ are defined by \eqref{zeta}.

From \eqref{conpq_1} using   the  Implicitization Theorem (see e.g. \cite{Cox})
we conclude that the first statement of the proposition holds.

2) By the construction  ${\bf V}(\mathcal{I})$ is the Zariski closure of systems
which are time-reversible with respect to \eqref{inv_pq} after a linear transformation \eqref{trans_ort}, 
so, in view
of Theorem \ref{th1} it is the Zariski closure of systems which
admit a first integral of the form \eqref{pqInt}.
However the set of systems in the  space of parameters of
\eqref{gs_uv}
having an analytic  first integral integral
of the form \eqref{pqInt} is an algebraic set (see e.g. Theorem 3.2.5 of \cite{RS}).
Therefore all systems from ${\bf V}(\mathcal{I})$ admit an analytic  first integral 
of the form \eqref{pqInt}.
\end{proof}

{\it Remark.} Obviously, in generic case the set of  time-reversible systems
is a proper subset of  ${\bf V}(\mathcal{I})$.

\medskip

As an example we consider the $1:-2$ resonant system of the form \eqref{gs_uv} of degree five,
\begin{equation} \label{gs_uv5}
\begin{aligned}
\dot x &= \phantom{-} x (1  - \sum_{u+v= 1}^2
                                  a_{2u, v }x^{2u}y^{v} ) =
     x
     - a_{01} x y
     - a_{20} x^3 - a_{21} x^3 y - a_{02} x y^2
     - a_{40} x^5
     ,\\
\dot y &=            -y( 2 - \sum_{ u+v= 1 }^2
                                  b_{2v,u}x^{2v}{y}^{u})=
                 -2 y + b_{01} y^2
                  + b_{20} x^2 y + b_{21} x^2 y^2 + b_{02} y^3  + b_{40} x^4 y .
\end{aligned}
\end{equation}

\begin{pro}
System \eqref{gs_uv5}  admits an analytic first integral of the form
\eqref{pqInt} if the 10-tuple $(a_{01},\dots,a_{40},b_{40,\dots,b_{01}})$ of its coefficients belong to the variety of the ideal
\begin{multline*}
\widetilde {\mathcal{I}}=
\la
2 a_{21} - b_{21}, -a_{40} b_{01}^2 + 2 a_{20}^2 b_{02}, 4 a_{02} a_{40} - b_{02} b_{40},
8 a_{02} a_{20}^2 - b_{01}^2 b_{40}, -2 a_{02} a_{20} b_{20} +\\ a_{01} b_{01} b_{40},
2 a_{01} a_{40} b_{01} - a_{20} b_{02} b_{20},
4 a_{01} a_{20} - b_{01} b_{20}, -a_{02} b_{20}^2 + 2 a_{01}^2 b_{40},
8 a_{01}^2 a_{40} - b_{02} b_{20}^2\ra.
\end{multline*}
\end{pro}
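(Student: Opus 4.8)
The plan is to apply Proposition \ref{prTR} with $p=1$, $q=2$ to the concrete family \eqref{gs_uv5} and to identify the elimination ideal $\mathcal{I}$ of \eqref{Ir} with the ideal $\widetilde{\mathcal{I}}$ in the statement. First I would read off the five index pairs $(u_k,v_k)$ and the matched parameters. Ordering the pairs as $(1,0),(0,1),(2,0),(1,1),(0,2)$, the $a$-parameters are $a_{20},a_{01},a_{40},a_{21},a_{02}$, the corresponding $b$-parameters are $b_{01},b_{20},b_{02},b_{21},b_{40}$, and the exponents $\zeta_k=u_k-v_k$ are $1,-1,2,0,-2$. Since here $q/p=2$, the relations $b_{qv_k,pu_k}-\tfrac qp\al^{\zeta_k}t_k$ of \eqref{idealH} specialize to $b_{01}-2\al t_1$, $b_{20}-2\al^{-1}t_2$, $b_{02}-2\al^{2}t_3$, $b_{21}-2t_4$, $b_{40}-2\al^{-2}t_5$, together with $a_{20}-t_1,\dots,a_{02}-t_5$ and the saturating generator $1-w\al$.

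Second, to work inside a genuine polynomial ring I would use $1-w\al$ to replace the negative powers $\al^{-1},\al^{-2}$ by $w,w^2$, obtaining an explicit set of generators of $H$ in $\C[w,\al,t_1,\dots,t_5,a,b]$. I would then compute $\mathcal{I}=H\cap\C[a,b]$ of \eqref{Ir} by computing a \gb{} of $H$ for a term order that eliminates $w,\al,t_1,\dots,t_5$ and keeping those basis elements that are free of these variables. The expected output is exactly the nine binomials generating $\widetilde{\mathcal I}$: for example the $\zeta_4=0$ relation forces $b_{21}=2t_4=2a_{21}$, which gives $2a_{21}-b_{21}$, and eliminating $\al,t_1,t_2$ from $a_{20}-t_1,\,a_{01}-t_2,\,b_{01}-2\al t_1,\,b_{20}-2\al^{-1}t_2$ yields $b_{01}b_{20}=4t_1t_2=4a_{01}a_{20}$, that is $4a_{01}a_{20}-b_{01}b_{20}$; the remaining seven generators arise analogously.

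Finally, once $\mathcal{I}=\widetilde{\mathcal I}$ is established, and hence $\mathbf{V}(\widetilde{\mathcal I})=\mathbf{V}(\mathcal{I})$, the conclusion follows immediately from part 2) of Proposition \ref{prTR}: every system of the form \eqref{gs_uv5} whose coefficient tuple lies in $\mathbf{V}(\widetilde{\mathcal I})$ admits a local analytic first integral of the form \eqref{pqInt}. The main obstacle is not the \gb{} computation, which is routine, but proving the correct inclusion. Merely checking that each of the nine binomials vanishes along the parametrization $(\al,t_1,\dots,t_5)\mapsto(a,b)$ only shows $\widetilde{\mathcal I}\subseteq\sqrt{\mathcal I}$, i.e. $\mathbf{V}(\mathcal{I})\subseteq\mathbf{V}(\widetilde{\mathcal I})$, which is the wrong direction for a sufficiency statement; the substantive content is the reverse inclusion $\mathbf{V}(\widetilde{\mathcal I})\subseteq\mathbf{V}(\mathcal{I})$, and this is precisely what the full elimination secures, by exhibiting $\widetilde{\mathcal I}$ as the entire intersection $H\cap\C[a,b]$ rather than an ideal merely contained in it. Keeping track of the signs of the $\zeta_k$ and of the $w$-versus-$\al$ substitution is the only delicate bookkeeping point.
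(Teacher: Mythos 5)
Your proposal is correct and follows essentially the same route as the paper: you build the same ideal $H$ from \eqref{idealH} for the family \eqref{gs_uv5} (identical generators, up to relabelling of the $t_k$), compute the elimination ideal $\mathcal{I}=H\cap\C[a,b]$ via a Groebner basis with an elimination ordering, identify it with $\widetilde{\mathcal{I}}$, and conclude by part 2) of Proposition \ref{prTR}. Your added remark that membership of the nine binomials in $\ker\theta$ alone would give only the inclusion $\mathbf{V}(\mathcal{I})\subseteq\mathbf{V}(\widetilde{\mathcal{I}})$, and that the full elimination is what secures the needed reverse inclusion, is an accurate observation about the logic that the paper leaves implicit.
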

\begin{proof}
In the case of system \eqref{gs_uv5} the ideal $H$ of Proposition \ref{prTR}
is
\begin{multline}
\la a_{01} - t_1,  a_{20} - t_2 , a_{02} - t_3 ,  a_{40} - t_4, a_{21}-t_5,\\  b_{20} - 2 t_1 \al^{-1},
 b_{01} - 2 t_2 \al , b_{40} - 2  t_3 \al^{-2} , b_{02} - 2 t_4 \al^{2},  b_{21}- 2 t_5, 1 - w \al
\ra .
 \end{multline}
Computing the reduced Groebner basis of this
ideal with respect to the lexicographic ordering with
$ \al> w> t_1> t_2> t_3> t_4>
 a_{01}> a_{02}> a_{20}> a_{21}> a_{40}> b_{01}> b_{02}> b_{20}> b_{21}> b_{40} $
  we obtain the set of polynomials
\begin{multline*}
\{2 a_{21} - b_{21},
4 a_{01} a_{20} - b_{01} b_{20}, a_{02} b_{20}^2 - 2 a_{01}^2 b_{40},
2 a_{02} a_{20} b_{20} - a_{01} b_{01} b_{40}, 8 a_{02} a_{20}^2 - b_{01}^2 b_{40},\\
a_{40} b_{01}^2 - 2 a_{20}^2 b_{02}, $ $ 2 a_{01} a_{40} b_{01} - a_{20} b_{02} b_{20},
8 a_{01}^2 a_{40} - b_{02} b_{20}^2,
4 a_{02} a_{40} -
 b_{02} b_{40}, 
\dots 
\},
\end{multline*}
where the dots stand for the polynomials which depend ob $\al,w, t_1,t_2,t_3,t_4,t_5$.

The polynomials of the Groebner basis 
 which do not
  depend on $\al,w, t_1,t_2,t_3,t_4$ form a basis of
   the  ideal $ \mathcal{I}$  of Proposition \ref{prTR}
  and they are exactly the polynomials  defining  the ideal in the statement of the
  present
  proposition.
\end{proof}

{\it Remark.}  By 1) of Proposition \ref{prTR} the variety ${\bf V}(\tilde {\mathcal{I}})$
is the Zariski closure of the set of time-reversible systems in family \eqref{gs_uv5}.

We denote  by $S$  the ordered  set of subscripts of the  coefficients of the nonlinear
terms of the first
equation in \eqref{gs_uv}. Letting       $\ell$  the number of elements of $S$,  $S$ can be 
 written as
 $$S =\{  (q u_1,p v_1),\dots, (q u_\ell, p v_{\ell})    \}= \{\bi_1, \dots, \bi_\ell\}.$$ For
$\bi_s = (q u_s, p v_s)$,  let $\bj_s = (q v_s, p u_s).$
We call $\bi_s$ and $\bj_s$ \emph{conjugate} vectors (or conjugate indexes).
Any monomial appearing in the coefficient
$v_{k_1 k_2}$  of \eqref{pqInt} has the form
$
a_{\bi_1}^{\nu_1} \cdots a_{\bi_\ell}^{\nu_\ell}
b_{\bj_\ell}^{\nu_{\ell+1}} \cdots b_{\bj_1}^{\nu_{2\ell}}
$
for some $\nu = (\nu_1, \dots, \nu_{2\ell})$. We use  notation \eqref{nu_gs}
adapted  to the case of system \eqref{gs_uv}, so now
\[
[ \nu ]
\stackrel{\text{def}}{=}
a_{\bi_1}^{\nu_1} \cdots a_{\bi_\ell}^{\nu_\ell}
b_{\bj_\ell}^{\nu_{\ell+1}} \cdots b_{\bj_1}^{\nu_{2\ell}}\,.
\]
For a given field $k$
we will write just $k[a,b]$ in place of
$k[a_{\bi_1}, \dots, a_{\bi_\ell}, b_{\bj_\ell}, \dots,  b_{\bj_1}]$,
and for $f \in k[a,b]$ write $f = \sum_{\nu \in \Supp(f)}f^{(\nu)}[\nu]$,
where $\Supp(f)$ denotes those $\nu \in \np^{2\ell}$ such that the
coefficient of $[\nu]$ in the polynomial $f$ is non-zero.

\begin{definition}\label{cdin}
Let
$$
f = \sum_{\nu \in Supp(f)} f^{(\nu)}
     a_{ qu_1,p v_1}^{\nu_1} \cdots a_{q u_\ell,p v_\ell}^{\nu_\ell}
     b_{q v_\ell,p u_\ell}^{\nu_{\ell+1}} \cdots b_{q v_1,p  u_1}^{\nu_{2\ell}}
\in \C[a,b].$$
The \emph{conjugate} $\widehat f$ of $f$ is the polynomial
obtained from $f$ by the involution
\[
f^{(\nu)} \to \bar f^{(\nu)}
\qquad
a_{q i, pj} \to b_{q j, p i}
\qquad
b_{q j, p i} \to a_{q i, p j};
\]
that is,
$$\widehat f =
\sum_{\nu \in Supp(f)} \bar f^{(\nu)}
a_{q u_1 , p v_1}^{\nu_{2\ell}} \cdots a_{ q u_\ell, p v_\ell}^{\nu_{\ell+1}}
b_{q  v_\ell,p  u_\ell}^{\nu_\ell} \cdots b_{q v_1,p u_1}^{\nu_1} \in \C[a,b].$$
\end{definition}

 Since
$
[\nu] = a_{q u_1, p v_1}^{\nu_1} \cdots a_{ q u_\ell,p v_\ell}^{\nu_\ell}
        b_{q v_\ell,p  u_\ell}^{\nu_{\ell+1}} \cdots b_{q v_1,p  u_1}^{\nu_{2\ell}}
$,
we have
$$\widehat{[\nu]}
=
a_{q u_1,p v_1}^{\nu_{2\ell}} \cdots a_{q u_\ell,p v_\ell}^{\nu_{\ell+1}}
b_{q v_\ell,p  u_\ell}^{\nu_\ell} \cdots b_{q v_1,p u_1}^{\nu_1},$$ so that
\begin{equation} \label{nuhatfact}
\widehat{[(\nu_1, \dots \nu_{2\ell})]} = [(\nu_{2\ell}, \dots, \nu_1)]\,.
\end{equation}
For this reason we will also write, for $\nu = (\nu_1, \dots, \nu_{2\ell})$,
$\widehat \nu = (\nu_{2\ell}, \dots, \nu_1)$.

Once the $\ell$--element set $S$ has been specified and ordered we let
$L : \np^{2\ell} \to \np^2$ be the map defined by
\be \label{L}
L(\nu) = (L_1(\nu), L_2(\nu))
       = \nu_1       \bi_1    + \cdots + \nu_\ell   \bi_\ell
       + \nu_{\ell+1}\bj_\ell + \cdots + \nu_{2\ell}\bj_1,
\ee
which is similar to the  map \eqref{Ldef}.

Let
$$
\mathcal{M}=\{\nu  \in \np^{2\ell} :\  L(\nu)=(qk,pk),\ k=0,1,2,\dots \}.
$$
Clearly,  $
\mathcal{M}$ is an Abelian monoid.

Let  $\mathfrak{X}$ be  the vector field of system \eqref{gs_uv}.
The following result was obtained in \cite{RS_BMS} (where speaking 
about $(s,t)$-polynomials we mean $(s,t)$-polynomials with respect to map \eqref{L}.
\begin{theorem}\label{th_int_pq}
Let family \eqref{gs_uv}  be given. There exists a formal
series $\Psi(x,y)$ of the form \eqref{pqInt} and polynomials $g_{q,p}$, $g_{2q,2p}$,
$\ldots$ in $\Q[a,b]$ such that
\begin{enumerate}
\item[(a)]
\be \label{g_qp}
\mathfrak{X}\Psi =\sum_{k=1}^\infty g_{qk,pk} x^{qk} y^{pk}
;
\ee
\item[(b)] for every pair $(i,j) \in \N_{-q}\times \N_{-p}$, $i + j \ge 0$,
           $v_{ij} \in \Q[a,b]$, and
$v_{ij}$ is an $(i,j)$--polynomial;
\item[(c)] for every $k \ge 1$, $\vk = 0$; and
\item[(d)] for every $k \ge 1$, $\gk \in \Q[a,b]$, and $\gk$ is a
           $(qk,pk)$--polynomial.
\end{enumerate}
\end{theorem}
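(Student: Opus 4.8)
The plan is to build the series $\Psi$ and the saddle quantities $g_{qk,pk}$ together by comparing, monomial by monomial, the two sides of the desired identity \eqref{g_qp}. Writing $\Psi=\sum_{i\ge -q,\,j\ge -p}v_{ij}\,x^{i+q}y^{j+p}$ with $v_{00}=1$, the decisive feature is that the linear part $p x\,\partial_x-q y\,\partial_y$ of $\mathfrak{X}$ is diagonal on monomials, multiplying $x^{i+q}y^{j+p}$ by $pi-qj$, whereas the nonlinear part of $\mathfrak{X}$ strictly raises the total degree. Consequently the coefficient of a fixed monomial $x^{A}y^{B}$ in $\mathfrak{X}\Psi$ is $(pA-qB)\,v_{A-q,B-p}$ plus a polynomial in the parameters $a,b$ and in coefficients $v_{s_1,s_2}$ of strictly lower total degree. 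First I would read this as a recursion in increasing total degree: at a non-resonant monomial, where $pA\ne qB$, I solve uniquely for $v_{A-q,B-p}$ so that the coefficient vanishes, which is exactly formula \eqref{vk1k2pq}. Since $\gcd(p,q)=1$, the resonance $pA=qB$ forces $(A,B)=(qk,pk)$; there the eigenvalue $pA-qB$ vanishes, the recursion leaves the corresponding coefficient free, and I normalise it by setting $\vk=0$, which is assertion (c). The nonlinear residual at the resonant monomials is then undetermined by any cancellation equation, and I define it to be the saddle quantity $\gk$; this produces the identity \eqref{g_qp} and establishes (a).

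To reach the structural statements (b) and (d) I would run a single induction on the total degree $s_1+s_2$ that keeps track of the grading given by the map $L$ of \eqref{L}. The point special to family \eqref{gs_uv} is that each parameter carries $L$-weight equal to its own subscript, so that $a_{mn}$ and $b_{mn}$ both have weight $(m,n)$ and multiplication of monomials in $\C[a,b]$ is additive for $L$; equivalently, in every term $v_{ij}\,x^{i+q}y^{j+p}$ the $L$-weight of the coefficient $v_{ij}$ is forced to equal the ``excess'' exponent $(i,j)$. The base case $v_{00}=1$ is a $(0,0)$-polynomial. In the inductive step each summand of \eqref{vk1k2pq} is a single parameter $a_{A-q-s_1,\,B-p-s_2}$ or $b_{A-q-s_1,\,B-p-s_2}$ times $v_{s_1,s_2}$, and by additivity of $L$ together with the inductive hypothesis that $v_{s_1,s_2}$ is an $(s_1,s_2)$-polynomial, every monomial so produced carries the same $L$-value; dividing by the integer $pA-qB$ does not disturb this, so $v_{A-q,B-p}$ is an $(A-q,B-p)$-polynomial, giving (b). The identical bookkeeping applied to the nonlinear residual at the resonant monomials shows that each $\gk$ is weighted homogeneous of the bidegree $(qk,pk)$ asserted in (d). Rationality is immediate: the only divisions are by the nonzero integers $pA-qB$, so every $v_{ij}\in\Q[a,b]$, and since each $\gk$ is an integer-coefficient combination of the $v_{s_1,s_2}$ and the parameters, $\gk\in\Q[a,b]$ as well.

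The step I expect to be the main obstacle is making the grading argument fully rigorous against Definition \ref{jkpoly}, where a $(j,k)$-polynomial is required to satisfy $L^{\ell}(\nu)=(j,k)$ only for all sufficiently large $\ell$. For the truncated polynomial family \eqref{gs_uv} the map $L$ of \eqref{L} is fixed once the index set is ordered, so the property is exactly weighted homogeneity and the induction above is self-contained; the delicate part is the reconciliation with the series version \eqref{gs_uv_inf}, where one must check that the $L^{\ell}$-value attached to each monomial occurring in a given $v_{ij}$ stabilises as further parameters are adjoined and is independent of the truncation. Since the identity \eqref{g_qp} together with the assertions (a)--(d) is precisely the content established in \cite{RS_BMS}, I would discharge this stabilisation bookkeeping by appeal to that reference, having reduced everything else to the recursion \eqref{vk1k2pq} and the additivity of $L$ explained above.
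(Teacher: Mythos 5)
The paper offers no proof of Theorem \ref{th_int_pq} at all: it is quoted from \cite{RS_BMS} (with the remark that $(s,t)$-polynomials are to be understood with respect to the map \eqref{L}), so there is no in-paper argument to compare yours against, and the relevant benchmark is the proof in the cited source. Your reconstruction is correct and is, in substance, that standard argument: build $\Psi$ degree by degree, solving $(pA-qB)\,v_{A-q,B-p}=\cdots$ at non-resonant monomials (which reproduces \eqref{vk1k2pq}), note that $\gcd(p,q)=1$ forces resonant monomials to be powers of $x^qy^p$, normalise $v_{qk,pk}=0$ there (part (c)), define $g_{qk,pk}$ as the resonant residual (part (a)), and obtain (b) and (d) by induction from additivity of the $L$-grading, using the feature of family \eqref{gs_uv} that the weight of each parameter $a_{qu,pv}$, $b_{qv,pu}$ under \eqref{L} equals its own subscript; rationality follows since the only divisions are by the nonzero integers $pA-qB$. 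One remark: the difficulty you flag at the end --- stabilisation of $L^\ell(\nu)$ as the truncation grows --- does not arise for this theorem, because \eqref{gs_uv} is a fixed polynomial family and \eqref{L} is fixed once the index set $S$ is ordered, so being a $(j,k)$-polynomial is exactly weighted homogeneity, as you yourself observe. Consequently your closing appeal to \cite{RS_BMS} is unnecessary and your argument is in fact self-contained; the stabilisation issue is only relevant for the series family \eqref{gs_uv_inf} (as used in Lemma \ref{lem2}), not here.
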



For  a given  family \eqref{gs_uv} and ordered  set $S$ of indices
 for any $\nu \in \np^{2\ell}$ define $V(\nu) \in \Q$ recursively,
with respect to $|\nu| = \nu_1 + \cdots + \nu_{2\ell}$, as follows:
\[
V((0, \dots, 0)) = 1;
\]
for $\nu \ne (0, \dots, 0)$
\[
V(\nu) = 0 \qquad \text{if} \qquad p L_1(\nu) = q L_2(\nu);
\]
and
when $p L_1(\nu) \ne q  L_2(\nu)$\,,
\be \label{bar6}
\begin{aligned}
V(\nu)
=
&\frac{1}{p L_1(\nu) -q  L_2(\nu)} \times
\\
\Biggl[
&\sum_{j = 1}^\ell
  V(\nu_1, \dots, \nu_j - 1, \dots, \nu_{2\ell})
 (L_1(\nu_1, \dots, \nu_j - 1, \dots, \nu_{2\ell}) + q)   \\
-
&\sum_{j = \ell + 1}^{2\ell}
  V(\nu_1, \dots, \nu_j - 1, \dots, \nu_{2\ell})
 (L_2(\nu_1, \dots, \nu_j - 1, \dots, \nu_{2\ell}) + p)
 \Biggr],
\end{aligned}
\ee
where  $L(\nu)$ is defined by \eqref{L}.


\begin{theorem} \label{th_V} For a family of systems of the form \eqref{gs_uv}
 let $\Psi$ be the formal series of the form \eqref{pqInt} computed by \eqref{vk1k2pq},
 $\{ g_{q k, p k} : k \in \N \}$ be the polynomials   in $\C[a,b]$ satisfying \eqref{g_qp}. Then
\begin{compactenum}[(a)]
\item for $\nu \in \Supp(v_{k_1,k_2})$, the coefficient
      $v_{k_1,k_2}^{(\nu)}$ of $[\nu]$ in $v_{k_1,k_2}$ is $V(\nu)$,
\item for $\nu \in \Supp(g_{qk,pk})$, the coefficient $g_{qk,pk}^{(\nu)}$ of
      $[\nu]$ in $g_{qk,pk}$
      is
           \be\label{Gcoef}
          \begin{aligned}
           g_{qk, p k}^{(\nu)}\phantom{2}
           & =-\left[\sum_{j = 1}^\ell
            V(\nu_1, \dots, \nu_j - 1, \dots, \nu_{2\ell})
           (L_1(\nu_1, \dots, \nu_j - 1, \dots, \nu_{2\ell}) + q) \right.\\
           -
           &\left.\sum_{j = \ell + 1}^{2\ell}
            V(\nu_1, \dots, \nu_j - 1, \dots, \nu_{2\ell})
           (L_2(\nu_1, \dots, \nu_j - 1, \dots, \nu_{2\ell}) + p)\right]\,,
           \end{aligned}
           \ee
      and
\item the following identities hold:
           \begin{subequations}\label{bar7}
           \begin{align}
         \kappa   V(\widehat\nu) &=  V(\nu)
           \quad \text{and} \quad
          \kappa  g_{qk,pk}^{(\widehat\nu)} = - g_{qk,pk}^{(\nu)}
           \qquad
           \text{for all}
           \quad
           \nu \in \np^{2\ell} \label{bar7i} \\
           V(\nu) &= g_{qk,pk}^{(\nu)} = 0
           \quad
           \text{if}
           \quad
           \widehat \nu = \nu \ne (0, \ldots, 0) \label{bar7ii} \,, \\
           &
           \text{where} \nonumber
           \\
            \kappa & =\left( \frac{q}{p} \right)^{(\nu_1+\dots +\nu_\ell)-(\nu_{l+1}+\dots +\nu_{2\ell}) }.
           \end{align}
           \end{subequations}
\end{compactenum}
\end{theorem}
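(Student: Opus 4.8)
The plan is to establish (a) and (b) by a single induction on $|\nu| = \nu_1 + \cdots + \nu_{2\ell}$, matching the recursion \eqref{bar6} defining $V(\nu)$ term-by-term against \eqref{vk1k2pq}, and then to deduce the symmetries (c) by a second induction on $|\nu|$. For (a), recall that each $v_{k_1,k_2}$ is a $(k_1,k_2)$-polynomial by Theorem \ref{th_int_pq}(b), so every $[\nu] \in \Supp(v_{k_1,k_2})$ satisfies $L(\nu) = (k_1,k_2)$ and hence $pk_1 - qk_2 = pL_1(\nu) - qL_2(\nu)$, so the prefactors of \eqref{bar6} and \eqref{vk1k2pq} agree. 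Writing $e_j$ for the $j$-th standard basis vector of $\Z^{2\ell}$, a monomial $[\nu]$ can arise in \eqref{vk1k2pq} only by multiplying a parameter $a_{\bi_j}$ (for $1 \le j \le \ell$) or $b_{\bj_{2\ell+1-j}}$ (for $\ell < j \le 2\ell$) by a monomial $[\nu - e_j]$ of a lower coefficient $v_{s_1,s_2}$ with $(s_1,s_2) = L(\nu - e_j)$; the weights $s_1 + q$ and $s_2 + p$ in \eqref{vk1k2pq} then read $L_1(\nu - e_j) + q$ and $L_2(\nu - e_j) + p$. Collecting the coefficient of $[\nu]$ and using the inductive hypothesis $v_{s_1,s_2}^{(\nu - e_j)} = V(\nu - e_j)$ reproduces \eqref{bar6} verbatim. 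Part (b) is the same computation restricted to the resonant indices: when $pL_1(\nu) = qL_2(\nu)$ the coefficient cannot be absorbed into $\Psi$, and the surviving numerator is exactly the term forced onto the right-hand side of \eqref{g_qp}, which is \eqref{Gcoef}.

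The symmetries (c) rest on two structural facts about $L$ under the conjugation $\nu \mapsto \widehat\nu = (\nu_{2\ell}, \dots, \nu_1)$. First, writing $L(\nu)$ out via $\bi_s = (qu_s, pv_s)$, $\bj_s = (qv_s, pu_s)$ and \eqref{zeta}, one checks directly that $L_1(\widehat\nu) = \tfrac{q}{p}L_2(\nu)$ and $L_2(\widehat\nu) = \tfrac{p}{q}L_1(\nu)$; in particular $pL_1(\widehat\nu) - qL_2(\widehat\nu) = -\bigl(pL_1(\nu) - qL_2(\nu)\bigr)$, so conjugation flips the sign of the denominator in \eqref{bar6}. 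Second, from \eqref{nuhatfact} one has $\widehat{(\widehat\nu - e_j)} = \nu - e_{2\ell+1-j}$, and $\kappa$ changes in a controlled way under lowering one index: $\kappa(\nu - e_k) = \tfrac pq\,\kappa(\nu)$ when $k \le \ell$ and $\kappa(\nu - e_k) = \tfrac qp\,\kappa(\nu)$ when $k > \ell$. These relations are all that is required.

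The induction on $|\nu|$ then proceeds as follows. The base case $\nu = (0,\dots,0)$ holds since $V(0) = 1$, $\widehat 0 = 0$ and $\kappa = 1$. For the step, if $pL_1(\nu) = qL_2(\nu)$ then $V(\nu) = 0$ by definition and, since the sign identity gives $pL_1(\widehat\nu) = qL_2(\widehat\nu)$, also $V(\widehat\nu) = 0$, so $\kappa V(\widehat\nu) = V(\nu)$ holds trivially. If instead $pL_1(\nu) \ne qL_2(\nu)$, I expand $V(\widehat\nu)$ by \eqref{bar6}, substitute $V(\widehat\nu - e_j) = \kappa(\nu - e_{2\ell+1-j})^{-1}V(\nu - e_{2\ell+1-j})$ from the hypothesis, rewrite $L_1(\widehat\nu - e_j) + q = \tfrac qp\bigl(L_2(\nu - e_{2\ell+1-j}) + p\bigr)$ and $L_2(\widehat\nu - e_j) + p = \tfrac pq\bigl(L_1(\nu - e_{2\ell+1-j}) + q\bigr)$ using the first structural identity, and reindex by $k = 2\ell + 1 - j$. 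The factor $q/p$ (resp. $p/q$) produced by these rewrites cancels exactly against $\kappa(\nu - e_k)^{-1}$, leaving a single $\kappa(\nu)^{-1}$, while the sign from the flipped denominator swaps the two sums back into the order of \eqref{bar6}; what remains is $\kappa(\nu)^{-1}V(\nu)$, giving $\kappa V(\widehat\nu) = V(\nu)$. The identical manipulation applied to \eqref{Gcoef}, now with $pL_1(\nu) = qL_2(\nu)$ and no denominator, yields $\kappa g_{qk,pk}^{(\widehat\nu)} = -g_{qk,pk}^{(\nu)}$. Finally \eqref{bar7ii} drops out: if $\widehat\nu = \nu \ne 0$ then $L_1(\nu) = L_1(\widehat\nu) = \tfrac qp L_2(\nu)$ forces $pL_1(\nu) = qL_2(\nu)$, whence $V(\nu) = 0$, and setting $\widehat\nu = \nu$ in \eqref{bar7i} gives $g_{qk,pk}^{(\nu)} = -g_{qk,pk}^{(\nu)}$, so $g_{qk,pk}^{(\nu)} = 0$.

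The step I expect to be most delicate is precisely the $\kappa$-bookkeeping in the non-resonant case: one must verify that lowering an $a$-index versus a $b$-index shifts the exponent $(\nu_1 + \cdots + \nu_\ell) - (\nu_{\ell+1} + \cdots + \nu_{2\ell})$ in opposite directions, and that after the reindexing $j \mapsto 2\ell + 1 - j$ the stray powers of $q/p$ coming from the rewrite of $L_1(\widehat\nu - e_j) + q$ and $L_2(\widehat\nu - e_j) + p$ combine with the hypothesis factors to produce a single $j$-independent $\kappa(\nu)^{-1}$. Once the two identities $L_1(\widehat\nu) = \tfrac qp L_2(\nu)$ and $L_2(\widehat\nu) = \tfrac pq L_1(\nu)$ are in hand, everything else is routine symbol-pushing.
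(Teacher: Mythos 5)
Your proof is correct, and it is in substance the argument the paper merely points to: the paper gives no proof of its own, delegating (a) and (b) to \cite{RS_BMS} and stating that (c) ``can be proved similarly as statement 3) of Theorem 3.4.5 of \cite{RS}'' (the $1:-1$ resonant analogue). Your induction on $|\nu|$ matching \eqref{bar6} term-by-term against \eqref{vk1k2pq} is exactly the mechanism behind (a) and (b) in \cite{RS_BMS}, and your second induction is the $p:-q$ adaptation of the book's symmetry argument. What is genuinely new in the $p:-q$ setting---and what you correctly isolate and verify---is the $\kappa$-bookkeeping: the identities $L_1(\widehat\nu)=\tfrac qp L_2(\nu)$ and $L_2(\widehat\nu)=\tfrac pq L_1(\nu)$ (which flip the sign of $pL_1-qL_2$ under conjugation), the relation $\widehat{(\widehat\nu-e_j)}=\nu-e_{2\ell+1-j}$, and the fact that the factors $q/p$ and $p/q$ produced by rewriting $L_1(\widehat\nu-e_j)+q=\tfrac qp\bigl(L_2(\nu-e_{2\ell+1-j})+p\bigr)$ and $L_2(\widehat\nu-e_j)+p=\tfrac pq\bigl(L_1(\nu-e_{2\ell+1-j})+q\bigr)$ cancel against $\kappa(\nu-e_{2\ell+1-j})^{-1}$ from the inductive hypothesis, leaving a uniform factor $\kappa(\nu)^{-1}$; I checked this cancellation and it is right, including the sign that swaps the two sums back into the order of \eqref{bar6}. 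Two small points you leave implicit: in (b), the overall minus sign in \eqref{Gcoef} arises because the coefficient of the resonant monomial in $\mathcal{X}\Psi$ equals $(pk_1-qk_2)v_{k_1,k_2}$ minus the recursion numerator, and the first term vanishes on resonance while $v_{qk,pk}$ is set to zero; and in \eqref{bar7ii}, when $\widehat\nu=\nu$ the exponent defining $\kappa$ vanishes, so $\kappa=1$, which is what makes your cancellation $g^{(\nu)}_{qk,pk}=-g^{(\nu)}_{qk,pk}$ literally valid (alternatively $\kappa>0$ already suffices, since $(\kappa+1)g^{(\nu)}_{qk,pk}=0$).
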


Statements (a) and (b) of the theorem are proved in \cite{RS_BMS}, statement
(c) can be proved similarly as statement 3) of Theorem 3.4.5 of \cite{RS}.

The next statements follows immediately from c) of Theorem  \ref{th_V}. 
\begin{cor}
The saddle quantities $g_{qk,pk}$ of system \eqref{gs_uv}
have the  form
\be \label{g_sq}
g_{qk,pk}= \frac 12 \sum_{v: L(\nu)=(qk,pk)} g^{(\nu)}_{qk,pk} (\kappa [\nu]- [\hat \nu]).
\ee
\end{cor}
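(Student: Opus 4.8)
The plan is to derive the formula \eqref{g_sq} directly from the symmetry relations in part (c) of Theorem~\ref{th_V}, treating it as a bookkeeping consequence of pairing each monomial with its conjugate. First I would fix $k$ and write the saddle quantity in full monomial expansion,
\[
g_{qk,pk} = \sum_{\nu:\, L(\nu)=(qk,pk)} g_{qk,pk}^{(\nu)}\,[\nu],
\]
which is exactly the definition of the coefficients running over all $\nu$ with $L(\nu)=(qk,pk)$. The key observation is that the involution $\nu \mapsto \widehat\nu$ defined by \eqref{nuhatfact} permutes the index set $\{\nu : L(\nu)=(qk,pk)\}$: since $L$ pairs $\bi_s$ with $\bj_s$, reversing the exponent string sends a monomial with $L(\nu)=(qk,pk)$ to one with the same value $(qk,pk)$, so the sum is invariant under relabelling $\nu \leftrightarrow \widehat\nu$.

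Next I would split the sum into this pair of indexing conventions and average them. Writing the sum once over $\nu$ and once over $\widehat\nu$ and adding, I get
\[
2\,g_{qk,pk} = \sum_{\nu} g_{qk,pk}^{(\nu)}\,[\nu] + \sum_{\nu} g_{qk,pk}^{(\widehat\nu)}\,[\widehat\nu].
\]
Now I substitute the conjugation identity $\kappa\, g_{qk,pk}^{(\widehat\nu)} = -\,g_{qk,pk}^{(\nu)}$ from \eqref{bar7i}, i.e. $g_{qk,pk}^{(\widehat\nu)} = -\kappa^{-1} g_{qk,pk}^{(\nu)}$, being careful about how $\kappa$ depends on $\nu$. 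The definition of $\kappa$ shows that $\kappa(\widehat\nu) = \kappa(\nu)^{-1}$, since reversing the exponent string swaps the $a$-block sum $\nu_1+\dots+\nu_\ell$ with the $b$-block sum $\nu_{\ell+1}+\dots+\nu_{2\ell}$ and hence negates the exponent of $q/p$. After this substitution the second sum becomes $-\sum_\nu \kappa^{-1} g_{qk,pk}^{(\nu)}[\widehat\nu]$, and reindexing the first sum so both run over the same representatives lets me collect the bracket $\kappa[\nu]-[\widehat\nu]$, yielding $g_{qk,pk}=\tfrac12\sum_\nu g_{qk,pk}^{(\nu)}(\kappa[\nu]-[\widehat\nu])$ after dividing by $2$.

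The main obstacle I anticipate is the careful treatment of the self-conjugate indices, where $\widehat\nu=\nu$. For such $\nu$ the relation \eqref{bar7ii} forces $g_{qk,pk}^{(\nu)}=0$, so these terms contribute nothing to either side and the averaging argument goes through without a correction term; I would make this explicit to justify that the pairing $\nu\leftrightarrow\widehat\nu$ genuinely partitions the support into two-element orbits together with a harmless set of vanishing fixed points. The only other point requiring attention is verifying the exact power of $\kappa$ that lands in front of $[\nu]$ versus $[\widehat\nu]$; since $\kappa$ is a function of $\nu$ and not a global constant, I would confirm that the factor attached to $[\nu]$ in the final formula is evaluated at the representative $\nu$ appearing there, matching the placement in \eqref{g_sq}. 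Once the sign from \eqref{bar7i} and the reciprocity $\kappa(\widehat\nu)=\kappa(\nu)^{-1}$ are both accounted for, the identity follows purely formally.
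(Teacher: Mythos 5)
Your strategy---expand $g_{qk,pk}=\sum_{\nu}g^{(\nu)}_{qk,pk}[\nu]$, observe that $\nu\mapsto\widehat\nu$ permutes $\{\nu: L(\nu)=(qk,pk)\}$, average the two indexings using \eqref{bar7i}, and dispose of the fixed points $\widehat\nu=\nu$ via \eqref{bar7ii}---is precisely what the paper means when it says the corollary ``follows immediately from c) of Theorem~\ref{th_V}'', and those parts of your argument are correct (including the reciprocity $\kappa(\widehat\nu)=\kappa(\nu)^{-1}$). The gap is exactly the step you flagged as ``requiring attention'' and then waved through: the final collection of the bracket. Carried out honestly, your algebra gives
\be
2\,g_{qk,pk}=\sum_{\nu}g^{(\nu)}_{qk,pk}\bigl([\nu]-\kappa(\nu)^{-1}[\widehat\nu]\bigr)
=\sum_{\nu}\kappa(\nu)^{-1}g^{(\nu)}_{qk,pk}\bigl(\kappa(\nu)[\nu]-[\widehat\nu]\bigr),
\ee
so the binomial $\kappa[\nu]-[\widehat\nu]$ appears with coefficient $\kappa^{-1}g^{(\nu)}_{qk,pk}$, not $g^{(\nu)}_{qk,pk}$, and since $\kappa$ varies with $\nu$ this factor cannot be absorbed or reindexed away. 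Indeed, comparing the coefficient of $[\nu]$ on both sides of \eqref{g_sq} (the term indexed by $\widehat\nu$ contributes $-g^{(\widehat\nu)}_{qk,pk}[\nu]=\kappa(\nu)^{-1}g^{(\nu)}_{qk,pk}[\nu]$ by \eqref{bar7i}) shows that \eqref{g_sq} as printed forces $\kappa(\nu)+\kappa(\nu)^{-1}=2$, i.e.\ $\kappa\equiv 1$, on the support of $g_{qk,pk}$, which fails when $p\ne q$. A concrete check: for $p=1$, $q=2$ and the system $\dot x=x-a_{21}x^3y$, $\dot y=-2y+b_{21}x^2y^2$ one computes directly $g_{2,1}=b_{21}-2a_{21}$, while the right-hand side of \eqref{g_sq} evaluates to $\tfrac12\bigl[(-2)(2a_{21}-b_{21})+1\cdot(\tfrac12 b_{21}-a_{21})\bigr]=\tfrac54\,(b_{21}-2a_{21})$.

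So the step you deferred is not a formality: it cannot be completed, because the identity \eqref{g_sq} as stated is itself off by these per-term factors (the $1:-1$ situation, where $\kappa\equiv1$, is the only one in which your derived formula and \eqref{g_sq} coincide). This is a defect of the statement rather than of your method: what you have actually proved is the corrected identity
$g_{qk,pk}=\tfrac12\sum_{\nu}\kappa^{-1}g^{(\nu)}_{qk,pk}(\kappa[\nu]-[\widehat\nu])$,
which still exhibits each saddle quantity as a constant-coefficient combination of the binomials $\kappa[\nu]-[\widehat\nu]$, and therefore serves equally well for the only use the paper makes of the corollary, namely Theorem~\ref{th_S} ($g_{qk,pk}\in I_{Sib}$, hence vanishing on ${\bf V}(I_{Sib})$). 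But as a proof of the literal equality \eqref{g_sq}, your last sentence (``the factor attached to $[\nu]$ \dots\ matching the placement in \eqref{g_sq}'') asserts something false, and no bookkeeping will make it true.
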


By the  analogy with $1:-1$ resonant case we call the ideal
\be \label{id_S_def}
I_{Sib} =\la  \kappa [\nu]- [\hat \nu] : \nu \in \mathcal{M} \ra
\ee
the {\it Sibirsky ideal} of system \eqref{gs_uv}. Obviously, transformations \eqref{trans_ort} form a group.
It is easy to see that for any $\nu\in \mathcal{M}$
$[\nu]$ is an invariant of group \eqref{trans_ort}. Sibirsky
studied such invariants for the case of $1:-1$ resonant system \eqref{gspq}
and used the ideal \eqref{id_S_def} to describe  the basis of the invariants
and the number of axis of symmetry of the corresponding real systems \cite{Sib1,Sib2}.

\begin{theorem}\label{th_S}
If the $2 \ell$-tuple of  parameters of \eqref{gs_uv} belong to
$\vv (I_{Sib})$ then the corresponding  system admits an analytic  first integral
of the form \eqref{pqInt}.
\end{theorem}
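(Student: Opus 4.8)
The plan is to reduce the statement to the integrability criterion recalled after \eqref{pqbar4}: the system \eqref{gs_uv} admits a local analytic first integral of the form \eqref{pqInt} if and only if all of its saddle quantities $g_{qk,pk}$ vanish at the given parameter values. Thus it suffices to show that whenever the parameter $2\ell$-tuple lies in $\vv(\is)$, every saddle quantity vanishes there. I would obtain this by proving the stronger algebraic fact that each $g_{qk,pk}$ belongs to the Sibirsky ideal $\is$ as an element of $\C[a,b]$.

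The membership $g_{qk,pk} \in \is$ is read off directly from the Corollary preceding the theorem. Formula \eqref{g_sq} writes
\[
g_{qk,pk} = \frac 12 \sum_{\nu:\, L(\nu)=(qk,pk)} g^{(\nu)}_{qk,pk}\bigl(\kappa [\nu]-[\hat\nu]\bigr),
\]
where the coefficients $\tfrac 12 g^{(\nu)}_{qk,pk}$ are rational constants. The key observation is that the index set of this sum is contained in $\mathcal{M}$: any $\nu$ contributing to $g_{qk,pk}$ satisfies $L(\nu)=(qk,pk)$ --- this is exactly the statement that $g_{qk,pk}$ is a $(qk,pk)$-polynomial with respect to the map \eqref{L}, part (d) of Theorem \ref{th_int_pq} --- and by the definition of $\mathcal{M}$ such a $\nu$ belongs to $\mathcal{M}$. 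Consequently every summand $\kappa[\nu]-[\hat\nu]$ is one of the generators of $\is$ listed in \eqref{id_S_def}, so $g_{qk,pk}$ is a $\Q$-linear combination of generators of $\is$ and therefore $g_{qk,pk} \in \is$ for every $k \in \N$.

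To conclude, I would fix a $2\ell$-tuple $(a^*,b^*) \in \vv(\is)$. By definition of the variety, every polynomial of $\is$ vanishes at $(a^*,b^*)$; in particular $g_{qk,pk}(a^*,b^*)=0$ for all $k \in \N$ by the membership just established. Applying the integrability criterion then yields a local analytic first integral of the form \eqref{pqInt} for the corresponding system, which is the assertion of the theorem. I expect no genuine obstacle here: the substantive work is already carried out in Theorems \ref{th_int_pq} and \ref{th_V} and their Corollary, and the only point deserving care is the verification that the index set of the sum in \eqref{g_sq} sits inside $\mathcal{M}$, which is pure bookkeeping once the $(qk,pk)$-polynomial property is in hand.
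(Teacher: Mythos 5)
Your proposal is correct and is essentially the paper's own argument: the paper's proof consists of the single line ``the conclusion follows from formula \eqref{g_sq}'', and your write-up simply makes explicit the steps implicit in that reference --- membership $g_{qk,pk}\in I_{Sib}$ via the $(qk,pk)$-polynomial property, vanishing on $\vv(I_{Sib})$, and the integrability criterion stated after \eqref{pqbar4}.
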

\begin{proof}
The conclusion follows from formula \eqref{g_sq}.
\end{proof}



The following statement shows that the variety of the Sibirsky ideal is the
Zariski closure of the set of systems, which are  time-reversible with respect to
\eqref{inv_pq}. The proof is based on an adaption of the ideas of \cite{CT,St}.
\begin{theorem}
Let $\mathcal{I}$ be the ideal defined by \eqref{Ir}. Then  
\be\label{t.isym}
  I_{Sib}=\mathcal{I}.
 \ee
\end{theorem}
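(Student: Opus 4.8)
The plan is to prove the two inclusions $\is \subseteq \mathcal{I}$ and $\mathcal{I} \subseteq \is$ separately, working throughout in the framework of binomial (lattice) ideals in the spirit of \cite{CT,St}. The starting observation is that, after adjoining the variable $w$ via $1 - w\al$, every generator of $H$ is a binomial, so $H$ is a binomial ideal; moreover the parametrization $(t_1,\dots,t_\ell,\al) \mapsto (a,b)$ sending $a_{\bi_k} \mapsto t_k$ and $b_{\bj_k} \mapsto \tfrac{q}{p}\al^{\zeta_k} t_k$ (with $\al$ invertible) is a twisted monomial map, and by the Implicitization Theorem $\vv(\mathcal{I})$ is the Zariski closure of its image. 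I will write $|\nu|_a = \nu_1 + \dots + \nu_\ell$ and $|\nu|_b = \nu_{\ell+1} + \dots + \nu_{2\ell}$.

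The inclusion $\is \subseteq \mathcal{I}$ is the routine direction. It suffices to show that each generator $\kappa[\nu]-[\hat\nu]$, $\nu \in \mathcal{M}$, lies in $H$, since it already lies in $\C[a,b]$. Passing to the quotient by $H$ we may substitute $a_{\bi_k}=t_k$, $b_{\bj_k}=\tfrac qp \al^{\zeta_k}t_k$ and $\al = w^{-1}$. Under this substitution $[\nu]$ and $[\hat\nu]$ become the same monomial in the $t_k$ up to a power of $q/p$ and a power of $\al$; the $\al$-exponents of $[\nu]$ and $[\hat\nu]$ differ by $\sum_{k}\zeta_k(\nu_k - \nu_{2\ell-k+1})$, which vanishes precisely because $\nu \in \mathcal{M}$ (this is the defining condition $L_1(\nu)/q = L_2(\nu)/p$), while the residual scalar discrepancy is exactly the factor $\kappa$ of \eqref{bar7}. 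Hence $\kappa[\nu]-[\hat\nu]$ maps to $0$ and so belongs to $H \cap \C[a,b] = \mathcal{I}$.

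For the reverse inclusion I would use that $\mathcal{I}=H\cap\C[a,b]$ is again a binomial ideal (elimination preserves the binomial property) and contains no nonzero monomial, since the image meets the open torus $\{\prod_k a_{\bi_k}b_{\bj_k}\neq 0\}$ (take all $t_k\neq 0$). Thus $\mathcal{I}$ is a lattice ideal, generated by binomials $[\mu]-\lambda[\mu']$ with $\mu,\mu'$ of disjoint support, so $\mu=w^+$ and $\mu'=w^-$ for $w:=\mu-\mu'$. Such a binomial vanishes on the image iff the two monomials have equal $(t,\al)$-degree, i.e. $w$ lies in the kernel lattice $\mathcal{L}$ of the exponent map $E\colon\Z^{2\ell}\to\Z^{\ell+1}$ induced by the parametrization, and then $\lambda=(q/p)^{|w|_b}$. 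The decisive computation is that $\mathcal{L}$ consists exactly of the conjugation-antisymmetric vectors obeying the monoid condition: the $t_k$-coordinates of $E$ force $w_{2\ell-k+1}=-w_k$, i.e. $\hat w = -w$, and the $\al$-coordinate forces $\sum_k\zeta_k w_k = 0$. For such $w$ put $\nu=w^+$; then $\hat\nu = w^-$ because $\hat w = -w$, and $\sum_k\zeta_k(\nu_k-\nu_{2\ell-k+1})=\sum_k\zeta_k w_k = 0$, so $\nu\in\mathcal{M}$ and $w=\nu-\hat\nu$. Since $\lambda=(q/p)^{|w|_b}=(q/p)^{|\nu|_b-|\nu|_a}=\kappa^{-1}$, the binomial $[\nu]-\lambda[\hat\nu]$ is a scalar multiple of $\kappa[\nu]-[\hat\nu]\in\is$. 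As the binomials for all $w\in\mathcal{L}$ generate $\mathcal{I}$, this yields $\mathcal{I}\subseteq\is$ and hence equality.

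The main obstacle is this reverse inclusion, and within it the bookkeeping that converts ``vanishing on the image'' into a clean statement about $\mathcal{L}$: one must justify that $\mathcal{I}$ is a genuine lattice ideal with no monomials, so that its binomial generators have disjoint support and the attached character is well defined, and that replacing $t_k\in\C$ by $t_k\in\C\setminus\{0\}$ leaves the Zariski closure unchanged, so that $\overline{\mathrm{im}}$ really is the toric variety cut out by $\mathcal L$. Once the binomial-elimination and lattice-ideal machinery of \cite{CT,St} is invoked correctly, the remaining identities $\hat w=-w$, $w^+\in\mathcal{M}$, and $\lambda\kappa=1$ are elementary; the care lies in the structural steps rather than in any single calculation.
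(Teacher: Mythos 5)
Your proposal is correct, and for the inclusion $\is\subseteq\mathcal{I}$ it is the paper's argument verbatim in different clothing: the paper introduces the ring homomorphism $\theta$ of \eqref{theta} (your ``substitution modulo $H$'') and checks $\theta(\kappa[\nu]-[\hat\nu])=0$ using exactly the identity $\sum_k\zeta_k(\nu_k-\nu_{2\ell-k+1})=0$ that characterizes membership in $\mathcal{M}$, together with the count of $q/p$ factors that produces $\kappa$. The substantive difference is the reverse inclusion: the paper does not prove it, saying only that it ``is similar as the proof in Theorem 5.2.2 of \cite{RS}'', whereas you carry it out via the kernel lattice of the exponent map $E$. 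Your computations there are all correct ($\hat w=-w$ from the $t_k$-coordinates, $\sum_k\zeta_k w_k=0$ from the $\al$-coordinate, hence $w^{+}\in\mathcal{M}$ with $\widehat{w^{+}}=w^{-}$ and $\lambda=\kappa^{-1}$), and the structural point you flag as delicate --- that $\mathcal{I}$ be a lattice ideal whose binomial generators have disjoint support --- can in fact be bypassed entirely: since a reduced Groebner basis of the binomial ideal $H$ under an elimination order consists of binomials, $\mathcal{I}=H\cap\C[a,b]$ is generated by binomials, and for any binomial $[\mu]-\lambda[\mu']\in\mathcal{I}$ one factors $[\mu]-\lambda[\mu']=[c]\bigl([w^{+}]-\lambda[w^{-}]\bigr)$ with $c$ the common part and $w=\mu-\mu'$; applying $\theta$ (which kills $\mathcal{I}$) and cancelling the nonzero monomial $\theta([c])$ forces $w\in\ker E$ and $\lambda=\kappa^{-1}$, so $[w^{+}]-\lambda[w^{-}]$ is a scalar multiple of a generator of $\is$ and the whole binomial lies in $\is$, with no appeal to primality, saturation, or the torus-closure issue. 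So your route buys a self-contained proof of precisely the direction the paper delegates to the literature, at the modest cost of invoking (avoidable) lattice-ideal machinery; everything else matches the paper's proof.
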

\begin{proof}
For $k=1,\dots, \ell$ let, as above,  $\zeta_k=u_k-v_k$ and
consider   the ring homomorphism
$$
\theta : \Q(a,b,t_1,\dots,t_\ell,\al,w) \longrightarrow
\Q(\al,t_1,\dots,t_\ell)
$$
defined by
\be\label{theta}
        a_{q u_k, p v_k} \mapsto t_k, \
b_{q v_k, p u_k} \mapsto \frac qp \al^{\zeta_k } t_k,\
w\mapsto 1/\al, \quad
  (k =  1, \ldots, \ell) .
  \ee

Let $H$ be the ideal \eqref{idealH}.
 Clearly,
  \be \label{Hker} H = \ker(\theta). \ee

 A reduced Groebner basis $G$ of
$\Q[a,b]\cap H$ can be found   computing a reduced Groebner basis of $H$ using an
elimination ordering with $\{a_{qu_j, p v_j},\ b_{q v_j, p u_j}\}<\{w, \al, t_j\} $
for all
$j=1,\dots, \ell$, and then intersecting it with $\Q[a,b]$.  Since
$H$ is binomial,  any reduced Groebner basis $G$ of $H$ also
consists of binomials. This means that $\mathcal{I}=H\cap \Q[a,b] $ is a
binomial ideal.

We show that $I_{Sib}\subset \mathcal{I}.$  Taking into account that
$\zeta_k=- \zeta_{2\ell -k}
$ by \eqref{theta} for any
$\alpha=  (\alpha_1,\dots,\alpha_{2\ell})  \in \mathcal{M}$
we have
$$
\begin{aligned}
\theta([\alpha])&  =
 t_1^{\alpha_1} \cdots t_\ell^{\alpha_\ell}
 t_\ell^{\alpha_{\ell+1}} (\frac qp)^{\alpha_{\ell+1}}  \al ^{\zeta_\ell
\alpha_{\ell+1}} \cdots t_1^{\alpha_{2\ell}} (\frac q p)^{\alpha_{2 \ell} }   \al ^{{\zeta_1}
\alpha_{2 \ell}} =\\
&
 (\frac qp)^{\alpha_{\ell+1}+\dots+\alpha_{2\ell} }
 t_1^{\alpha_1} \cdots t_\ell^{\alpha_\ell}
 t_\ell^{\alpha_{\ell+1}}
 \cdots t_1^{\alpha_{2\ell}} \al ^{-(\zeta_{\ell+1} \alpha_{\ell+1}+\dots+\zeta_{2\ell }
 \alpha_{2\ell} )}
 \end{aligned}
$$
and
$$
\begin{aligned}
\theta([\hat \alpha])&  =
 t_1^{\alpha_{2 \ell}} \cdots t_\ell^{\alpha_{\ell+1}}
 t_\ell^{\alpha_{\ell}}  (\frac qp)^{\alpha_{\ell}}  \al ^{\zeta_\ell
\alpha_{\ell}} \cdots t_1^{\alpha_{1}} (\frac q p)^{\alpha_{1} }   \al ^{{\zeta_1}
\alpha_{1}} =\\
&
 (\frac qp)^{\alpha_{1}+\dots+\alpha_{\ell} }
 t_1^{\alpha_1} \cdots t_\ell^{\alpha_\ell}
 t_\ell^{\alpha_{\ell+1}}
 \cdots t_1^{\alpha_{2\ell}}
 \al ^{\zeta_{1} \alpha_{1}+\dots+\zeta_{\ell }
 \alpha_{\ell} }
 \end{aligned}
$$
Since
$$
-(\zeta_{\ell+1} \alpha_{\ell+1}+\dots+\zeta_{2\ell} \alpha_{2 \ell}) =\zeta_{1} \alpha_{1}+\dots+\zeta_{\ell }\alpha_\ell,
$$
we obtain
$ \theta(\kappa [\alpha]- [\hat \alpha] )=0.
$
Thus, $\kappa [\alpha]- [\hat \alpha]\in \ker(\theta) $ yielding
 $\kappa [\alpha]- [\hat \alpha]\in \mathcal{I}$.

The proof of the inclusion $\mathcal{I}\subset I_{Sib} $ is similar
as the proof in Theorem 5.2.2 of \cite{RS}.
\end{proof}

As an immediate corollary of  Theorem  \ref{t.isym} and Proposition \ref{prTR}
we have
\begin{theorem} \label{thm1}
 {
The variety of the Sibirsky ideal $I_{Sib} $ is the Zariski closure of the set
$\mathcal{R}$ of all time-reversible systems in the family
(\ref{gs_uv}).}
\end{theorem}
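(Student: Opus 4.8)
The plan is to obtain Theorem~\ref{thm1} as a purely formal consequence of the two preceding results, with no new computation: I would chain the set description of Proposition~\ref{prTR} with the ideal identity of Theorem~\ref{t.isym} to read off the claimed description of $\vv(\is)$.

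I would argue in two short steps. First, Theorem~\ref{t.isym} gives the ideal equality $\is=\mathcal{I}$; since equal ideals cut out the same affine variety, this yields $\vv(\is)=\vv(\mathcal{I})$. Second, part~1) of Proposition~\ref{prTR} identifies $\vv(\mathcal{I})$ as the Zariski closure of $\mathcal{R}$, where $\mathcal{R}$ is the set of systems in the family \eqref{gs_uv} that are time-reversible with respect to \eqref{inv_pq} after a scaling \eqref{trans_ort}. Composing the two identifications gives $\vv(\is)=\overline{\mathcal{R}}$, which is precisely the assertion of the theorem.

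The one point I would make explicit, rather than gloss over, is the meaning of ``time-reversible'' in the definition of $\mathcal{R}$: it is time-reversibility with respect to \eqref{inv_pq} \emph{possibly after} the scaling \eqref{trans_ort}, exactly the set produced by Proposition~\ref{prTR}. This is the correct notion here, because the subset cut out by pure time-reversibility (the scaling taken to be the identity) is in the generic case of strictly smaller dimension and hence cannot be Zariski dense in $\vv(\mathcal{I})$. With this reading the statement is compatible with the earlier remark that $\mathcal{R}$ is a \emph{proper} subset of $\vv(\mathcal{I})$: $\mathcal{R}$ is the image of a rational parametrization, hence a constructible set that can miss boundary points (for instance those where an $a$-coordinate vanishes while its conjugate $b$-coordinate does not), whereas $\vv(\mathcal{I})$ is merely its closure.

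Accordingly, the substantive work is not in this corollary but in its two inputs --- Proposition~\ref{prTR}, which uses the Implicitization Theorem to realize $\vv(\mathcal{I})$ as the closure of an image, and above all Theorem~\ref{t.isym}, whose binomial and elimination-ordering argument establishes $\is=\mathcal{I}$. The only real obstacle at the level of Theorem~\ref{thm1} is the bookkeeping identification of $\mathcal{R}$ with the set named in Proposition~\ref{prTR}; once that is in place the proof is immediate.
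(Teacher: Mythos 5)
Your proposal is correct and follows exactly the paper's own route: the paper presents Theorem~\ref{thm1} as an immediate corollary of the identity $\is=\mathcal{I}$ in \eqref{t.isym} and part~1) of Proposition~\ref{prTR}, which is precisely the two-step chaining you describe. Your explicit reading of $\mathcal{R}$ as the set of systems reversible with respect to \eqref{inv_pq} after a transformation \eqref{trans_ort} is the interpretation the paper intends (and, as you note, the one required for the closure statement to hold).
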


The above studies shows that the theory regarding to the  computation and the structure
 of the saddle quantities for $1:-1$ resonant systems
has a counterpart also in the family of   $p:-q$ resonant systems,
however not in the whole family, but just in subfamilies of the form
\eqref{gs_uv_inf} and \eqref{gs_uv}. It is in agreement with the known
fact   that the study of local integrability of $p:-q$ resonant systems is
much more difficult that the studies in $1:-1$ case, which can be
observed already in the quadratic and the cubic case \cite{CGRS,FSZ}.

\section*{Acknowledgements}
 The first
author  is partially supported by a MINECO/ FEDER grant number PID2020-113758GB-I00 and an AGAUR (Generalitat
de Ca\-ta\-lu\-nya) grant number 2017SGR-1276. The second author acknowledges the support by  Slovenian Research Agency (core research program P1-0306).

\end{document}